\definecolor{mygreen}{HTML}{43a047}
\def\ulal{\underline{\alpha}}
\def\olal{\overline{\alpha}}
\def\op{\overline{p}}
\def\opsi{\overline{\psi}}
\def\oq{\overline{q}}
\newcommand{\Om}{\Omega}
\newcommand{\D}{\Delta}
\newcommand{\pt}{p_t}
\newcommand{\ptt}{p_{tt}}
\newcommand{\ddt}{\frac{\textup{d}}{\textup{d}t}}
\newcommand{\ds}{\, \textup{d} s }
\newcommand{\dxs}{\, \textup{d}x\textup{d}s}
\newcommand{\intTO}{\int_0^t \int_{\Omega}}
\newcommand{\eq}[1]{$#1$}
\newcommand{\nLtwo}[1]{\|#1\|_{L^2}}
\newcommand{\nLfour}[1]{\|#1\|_{L^4}}
\newcommand{\nHtwo}[1]{\|#1\|_{H^2}}
\newcommand{\nLtwoLtwo}[1]{\|#1\|_{L^2 (L^2)}}
\newcommand{\nLtwoLfour}[1]{\|#1\|_{L^2 (L^4)}}
\newcommand{\nLtwoLinf}[1]{\|#1\|_{L^2 (L^\infty)}}
\newcommand{\nLinfLtwo}[1]{\|#1\|_{L^\infty (L^2)}}
\newcommand{\nLinfLfour}[1]{\|#1\|_{L^\infty (L^4)}}
\newcommand{\nLinfLinf}[1]{\|#1\|_{L^\infty (L^\infty)}}
\newcommand{\prodLtwo}[2]{(#1, #2)_{L^2}}
\newcommand{\R}{\mathbb{R}} 
\newcommand{\Ltwo}{L^2(\Omega)}
\newcommand{\Honetwo}{{H_\diamondsuit^2(\Omega)}}
\newcommand{\Honethree}{{H_\diamondsuit^3(\Omega)}}
\newcommand{\spaceW}{X^{\textup{W}}}
\newcommand{\spaceK}{X^{\textup{K}}}
\newcommand{\LtwoLtwo}{L^2(0,T; L^2(\Omega))}
\newcommand{\LinfLinf}{L^\infty(0,T; L^\infty(\Omega))}
\newcommand{\CHone}{C_{H^1, L^4}}
\newcommand{\CHtwo}{C_{H^2, L^\infty}}
\newcommand{\TK}{\mathcal{T}^{\textup{K}}} 
\newcommand{\TW}{\mathcal{T}^{\textup{W}}}
\definecolor{grey}{rgb}{0.5,0.5,0.5}
\crefname{hypothesis}{Hypothesis}{Hypotheses}
\renewcommand\footnotemark{}
\title{The inviscid limit of third-order linear and nonlinear acoustic equations\thanks{\vspace*{-2mm}\funding{The work of the first author was supported by the Austrian Science Fund {\sc fwf} under the grants P30054 and DOC 78.}}}
\author{Barbara Kaltenbacher\thanks{Department of Mathematics, Alpen-Adria-Universit\"at Klagenfurt, Austria (\email{barbara.kaltenbacher@aau.at}).}
\and Vanja Nikoli\'c\thanks{Department of Mathematics, Radboud University, The Netherlands  (\email{vanja.nikolic@ru.nl}).}
}
\begin{document}

\maketitle

\begin{abstract}
We analyze the behavior of third-order in time linear and nonlinear sound waves in thermally relaxing fluids and gases as the sound diffusivity vanishes. The nonlinear acoustic propagation is modeled by the Jordan--Moore--Gibson--Thompson equation both in its Westervelt and in its Kuznetsov-type forms, that is, including quadratic gradient nonlinearities. As it turns out, sufficiently smooth solutions of these equations converge in the energy norm to the solutions of the corresponding inviscid models at a linear rate. Numerical experiments illustrate our theoretical findings.
\end{abstract}

\begin{keywords}
inviscid limit, relaxing media, nonlinear acoustics, convergence rates
\end{keywords}

\begin{AMS}
  35L05, 35L72
\end{AMS}  

\section{Introduction}
The present work focuses on the limiting behavior of linear and nonlinear equations that describe the motion of sound waves through thermally relaxing media, as the diffusivity of sound vanishes. In modeling, the need to combine thermal relaxation with the nonlinear and dissipative effects leads to third-order in time equations. They have the following general form:
\begin{equation}\label{generalJMGT}
\tau u_{ttt} +  u_{tt} - (\delta +\tau c^2) \D u_t -c^2 \D u = f(u, u_t, u_{tt}, \nabla u, \nabla u_t),
\end{equation}
where the function $u=u(x,t)$ may denote the acoustic pressure or acoustic velocity potential. The parameter $\tau>0$ denotes the thermal relaxation time, $c>0$ the speed of sound, and the coefficient $\delta$ is often referred to as the diffusivity of sound~\cite[\S 3]{hamilton1998nonlinear}. The parameter $\delta$ is relatively small in fluids and gases, which motivates our research into the behavior of solutions of \eqref{generalJMGT} as $\delta \rightarrow 0^+$. We take into account two types of nonlinearities, which have a physical motivation:
	\begin{align} \label{f1}
	f(u, u_t, u_{tt})= \frac12 (k u^2)_{tt}= k u u_{tt}+k u_t^2,
	\end{align}
	and
	\begin{align} \label{f2}
	f(u_t, u_{tt}, \nabla u, \nabla u_t)= \frac12(\kappa u_t^2+\sigma |\nabla u|^2 )_t= \kappa u_t u_{tt}+\sigma \nabla u \cdot \nabla u_t.
	\end{align}
\indent Third-order models of nonlinear acoustics in the form of \eqref{generalJMGT} originate from using a general temperature law within the governing system of equations, which includes conservation laws and constitutive equations of the medium~\cite{jordan2014second}. By the standard Fourier temperature law, a thermal disturbance at one point has an instantaneous effect elsewhere in the medium~\cite{liu1979instantaneous,gurtin1968general}, which may lead to an infinite speed of propagation paradox in waves. The Maxwell--Cattaneo law, on the other hand, introduces a time lag between the temperature gradient and the heat flux induced by it~\cite{kulish2003relationship,straughan2011heat} via
\[\tau \mathbf{q}_t+\mathbf{q}=- K\nabla \mathbf{\theta}.\]
The heat then propagates in time via thermal waves, which is often referred to as the second-sound phenomenon in the literature~\cite{jordan2014second, straughan2011heat}. Employing the Maxwell--Cattaneo temperature law within the governing equations of sound propagation leads to third-order in time models \eqref{generalJMGT} that avoid the paradox of infinite speed of propagation and instead have the expected hyperbolic character. \\
\indent Third-order wave motion may also originate from the presence of molecular relaxation when the pressure-density relation of the medium is not satisfied exactly, but up to a memory term; see~\cite[\S 1.1]{naugolnykh2000nonlinear} and~\cite[\S 4]{rudenko1977theoretical}. Such relaxation mechanisms typically occur in media with ``impurities"; these can be, for example, water with micro-bubbles, seawater, chemically reacting fluids, or a mixture of gases. As such, they arise in various applications. For instance, micro-bubbles are often used as a contrast agent in ultrasonic imaging~\cite{dijkmans2004microbubbles}. They are also known to increase the speed and efficacy of the focused ultrasound treatments~\cite{stride2010cavitation}.  \\
\indent In this work, we investigate the convergence of solutions to equations \eqref{generalJMGT} as $\delta \rightarrow 0^+$. The analysis is performed for $\tau>0$ fixed, and so the term ``inviscid limit" should be understood in this context as the vanishing sound diffusivity limit in thermally relaxing fluids or gases. We consider the limiting behavior in the linearized versions of \eqref{generalJMGT}  on smooth bounded domains as well as in the nonlinear PDE, which is referred to as the Jordan--Moore--Gibson--Thompson equation in the literature. \\
\indent Our main results pertain to the convergence of solutions to \eqref{generalJMGT} with homogeneous Dirichlet boundary conditions in the energy norm to their inviscid counterparts, as the sound diffusivity vanishes. It turns out that sufficiently smooth solutions of \eqref{generalJMGT} converge to the solutions of the inviscid equation in the energy norm at a linear rate:
\[
\|u^{(\delta)}-u\|_{\textup{E}} \lesssim \delta \qquad \text{as } \ \delta \rightarrow 0^+;
\]
see also \eqref{spaceE} below. The smoothness requirements are naturally higher in the presence of quadratic gradient nonlinearity~\eqref{f2}. We refer to upcoming Theorems~\ref{Thm:West_WeakLimit_lin}, \ref{Thm:West_WeakLimit}, and~\ref{Thm:Kuzn_Limit} for details.\\
\indent We organize the remaining of our exposition as follows. In Section~\ref{Sec:Modeling}, we provide more details about the mathematical modeling of ultrasonic waves in thermally relaxing fluids and give an overview of related results. Section~\ref{Sec:Lin_JMGTWest} deals with a linear version of \eqref{generalJMGT}. There we derive a uniform bound with respect to $\delta$ for its solutions and prove a limiting result as $\delta$ vanishes; see Theorem~\ref{Thm:West_WeakLimit_lin}. Section~\ref{Sec:FixedPoint_JMGTWest_Limit} extends the investigation to the JMGT equation with the right-hand side nonlinearity given by \eqref{f1} by employing Banach's Fixed-point theorem; see Theorem~\ref{Thm:West_WeakLimit}. In Section \ref{Sec:Lin_JMGTKuzn}, we return to a linearized problem to derive a higher-order energy bound that is uniform with respect to $\delta$. This is needed so that in Section~\ref{Sec:FixedPoint_KuznWest_Limit} we can analyze the limiting behavior of the JMGT equation with the right-hand side given by \eqref{f2}; see Theorems~\ref{Thm:FixedPoint_JMGTKuzn} and~\ref{Thm:Kuzn_Limit}. Finally, in Section~\ref{Sec:NumResults} we provide the results of numerical experiments, which illustrate our theory and the convergence rate $O(\delta)$.
\section{Mathematical modeling and related work  } \label{Sec:Modeling}
The Jordan--Moore--Gibson--Thompson (JMGT) equation, given by
\begin{equation}\label{JMGT_Kuzn}
\tau \psi_{ttt} + \psi_{tt} - (\delta +\tau c^2) \D \psi_t -c^2 \D \psi = \frac12 (\kappa \psi_t^2+\sigma |\nabla \psi|^2 )_t, 
\end{equation}
is a model of ultrasonic propagation that accounts for thermal relaxation, dissipation due to viscosity, and nonlinear effects of quadratic type. It is expressed in terms of the acoustic velocity potential $\psi=\psi(x,t)$; we refer to~\cite{jordan2014second} for its derivation. The parameter $\tau>0$ represents the relaxation time of the heat flux and the coefficient $c>0$ stands for the speed of sound.  The nonlinear parameters on the right-hand side are typically  $\sigma=2$ and $\kappa= \frac{B/A}{c^2}$, where $B/A$ is the parameter of nonlinearity that arises from the pressure-density relation in a given medium. For the purposes of our analysis, it is sufficient to assume that $\sigma$, $\kappa \in \mathbb{R}$. \\
\indent Local nonlinear effects in sound propagation can often be neglected if the propagation distance is sufficiently large in terms of the number of wavelengths. In such cases, employing the approximation $|\nabla \psi|^2 \approx (1/c^2) \, \psi_t^2$ in \eqref{JMGT_Kuzn} leads to
\begin{equation}  \label{JMGT_West_potential}
\begin{aligned}
&\tau \psi_{ttt}+ \psi_{tt}-c^2\Delta \psi-(\delta+\tau c^2) \Delta \psi_t
=\frac{1}{c^2}\left(1+\frac{B}{2A}\right) (\psi_{t}^{2})_t.
\end{aligned}
\end{equation}
This equation is frequently expressed in terms of the acoustic pressure. Formally differentiating \eqref{JMGT_West_potential} with respect to time and then using the relation $p=\varrho \psi_t$, where $\varrho$ is the medium density, yields
\begin{equation}\label{JMGT_West}
\tau p_{ttt} + p_{tt} - (\delta +\tau c^2) \D p_t -c^2\D p = \frac12 (k p^2)_{tt},
\end{equation}
where now $k=\frac{1}{\varrho c^2}\left(1+\frac{B}{2A}\right)$. Again, for our theoretical purposes, we can take $k \in \mathbb{R}$. Letting $\tau \rightarrow 0$ in \eqref{JMGT_Kuzn} and \eqref{JMGT_West} leads to the classical second-order models of nonlinear acoustics -- Kuznetsov's~\cite{kuznetsov1971equations} and Westervelt's~\cite{westervelt1963parametric} equations, respectively. For this reason and to distinguish different third-order equations, we will henceforth refer to \eqref{JMGT_Kuzn} and \eqref{JMGT_West} as the JMGT--Kuznetsov and JMGT--Westervelt equation, respectively.   \\
\indent A linear version of these equations is referred to as the Moore--Gibson--Thompson (MGT) equation~\cite{moore1960propagation,thompson}:
\begin{equation}\label{MGT}
\tau p_{ttt} + \alpha p_{tt} - (\delta +\tau c^2) \D p_t -c^2\D p = 0.
\end{equation}
\indent Early mathematical investigations of third-order in time linear PDEs include the studies on classical solutions and Cauchy problems by Varlamov in~\cite{varlamov2003time, varlamov1987fundamental, varlamov1992hyperbolic}, and Renno in~\cite{renno1984wave}. Matkowsky and Reiss studied the asymptotic expansion of solutions to the linear problem as $\tau \rightarrow 0^+$ in~\cite{matkowsky1971asymptotic}. Further studies on the theory of singular perturbation can be found in~\cite{varlamov1988asymptotic, anna1980three}. In~\cite{varlamov1990asymptotic}, Varlamov and Nesterov analyzed the linear equation with spatially varying coefficients, establishing results on the existence and uniqueness of classical solutions and their asymptotic expansion as $\tau \rightarrow 0^+$. Nonlinear third-order propagation was seemingly first investigated by Varlamov in~\cite{varlamov2000long, varlamov1997third} in terms of existence and asymptotic behavior of classical solutions with a heuristically motivated right-hand side nonlinearity given by $\Delta (u^2)$.  \\
\indent A more recent mathematical research on third-order ultrasonic waves was initiated in~\cite{kaltenbacher2011wellposedness} with a semigroup approach employed in the well-posedness and stability analysis of the linear equation \eqref{MGT}. As concluded in~\cite{kaltenbacher2011wellposedness}, exponential stability of solutions requires that \eq{\gamma:=\alpha- \frac{\tau c^2}{\delta+\tau c^2}>0.}
The problem is unstable when $\gamma<0$ and marginally stable when $\gamma=0$. We note that in this work the focus is on the short-time behavior, and therefore no assumptions on the sign of $\gamma$ will be made. \\
\indent Since the results of~\cite{kaltenbacher2011wellposedness}, the interest in the qualitative behavior of third-order acoustic equations has flourished significantly, and these linear and nonlinear models represent by now a very active area of research. We thus provide the reader with a selection of relevant works here on the analysis of linear~\cite{pellicer2019wellposedness, bucci2019regularity, dell2017moore, conejero2015chaotic, pellicer2020uniqueness, marchand2012abstract} and nonlinear third-order acoustic equations~\cite{kaltenbacher2012well, racke2020global, chen2019nonexistence}. We also note that the limiting behavior of solutions for vanishing thermal relaxation time $\tau$ (and fixed $\delta>0$) has been studied in, e.g.,~\cite{KaltenbacherNikolic, bongarti2020vanishing}. Taking into account the effects of both thermal and molecular relaxation leads to third-order equations with memory, which have also been a topic of extensive research recently; see, for example,~\cite{lasiecka2017global, dell2016moore, alves2018moore} and the references given therein. To the best of our knowledge, the present work is the first dealing with the convergence of solutions to third-order equations as the sound diffusivity $\delta$ vanishes. 
\subsection{Notation and auxiliary results}
Before proceeding, let us briefly set the notation and recall commonly used inequalities and embedding results. To simplify notation, we often omit the spatial domain and the time interval when writing norms; in other words, $\|\cdot\|_{L^p (L^q)}$ denotes the norm on $L^p(0,T;L^q(\Omega))$.\\
\indent Throughout the paper, we make the following regularity assumption concerning the spatial domain:
\begin{itemize}
	\item [($A_1$)] $\Om \subset \R^d$ is an open, bounded, and $C^{1,1}$ regular \emph{or} polygonal and convex set, where $d \in \{1, 2, 3\}$. 
\end{itemize}
When stating solution spaces for $p$ and $\psi$, we denote
\begin{equation} \label{sobolev_withtraces}
\begin{aligned}
\Honetwo=H_0^1(\Omega)\cap H^2(\Omega),\quad
\Honethree=\left\{\psi \in H^3(\Omega)\,:\, \mbox{tr}_{\partial\Omega} \psi = 0, \  \mbox{tr}_{\partial\Omega} \D \psi = 0\right\}.
\end{aligned}
\end{equation}
In the analysis, we will need to rely on the boundedness of the operator $(-\Delta)^{-1}:L^2(\Omega)\to \Honetwo$. We point out that since we do not need a stronger elliptic regularity result than this, we do not introduce stronger regularity assumptions than given in $(A_1)$ on $\Omega$. Additionally, we will often use the Poincar\'{e}--Friedrichs inequality and the continuous embeddings $H^1(\Omega)\hookrightarrow L^4(\Omega)$ and $H^2(\Omega) \hookrightarrow L^\infty(\Omega)$.\\
\indent We occasionally use $x \lesssim y$ to denote $x \leq C y$, where the generic constant $C>0$ does not depend on the sound diffusivity $\delta$, but may depend on other medium parameters and the final time $T$.
\section{The generalized Moore--Gibson--Thompson equation}  \label{Sec:Lin_JMGTWest}
We begin by investigating an initial boundary-value problem for the following generalization of the Moore--Gibson--Thompson equation:
\begin{subequations} \label{IBVP_LinWestJMGT}
\begin{equation}\label{LinWestJMGT}
\tau p_{ttt} + \alpha p_{tt} - (\delta +\tau c^2) \D p_t -c^2\D p - \mu p_t - \eta p= f, 
\end{equation}
\text{with $p_{\vert \partial \Om}=0$ and initial conditions}
\begin{align} \label{IC:LinWestJMGT}
(p, \pt, \ptt)\vert_{t=0}=(p_0, p_1, p_2).
\end{align}
\end{subequations}
In particular, we wish to derive an energy bound for \eqref{LinWestJMGT} that is uniform with respect to $\delta$ and will later allow us to derive the corresponding bound for the nonlinear JMGT--Westervelt equation. For this reason, the coefficients $\alpha$, $\mu$, and $\eta$ are space and time dependent. We note that the standard Moore--Gibson--Thompson equation \eqref{MGT} is obtained as a special case of \eqref{LinWestJMGT} by setting $\mu$ and $\eta$ to zero. To facilitate the analysis, we make the following regularity assumptions. \vspace*{1mm}
\begin{itemize}
	\item[($A_2$)] 
	The coefficients $\alpha$, $\mu$, and $\eta$ are sufficiently smooth and  uniformly bounded:
	\begin{equation}
\begin{aligned}
	&1-\alpha\in W^{1,\infty}(0,T;\Honetwo),\
	\mu\in L^\infty(0,T;\Honetwo),\
	\eta\in L^\infty(0,T;H_0^1(\Omega)),\\[1mm]
	&\|1-\alpha\|_{W^{1,\infty}(\Honetwo)},\ \|\mu\|_{L^\infty(\Honetwo)},\ \nLinfLtwo{\nabla\eta}\leq R,
\end{aligned}
\end{equation} 
where $R$ is a positive constant independent of $\delta$. This assumption further implies that	\eq{\ulal \leq \alpha \leq \olal} for $\ulal=1-\CHtwo R$ and $\olal=1+\CHtwo R$.    \vspace*{1mm}
	\item[($A_3$)] The initial conditions \eqref{IC:LinWestJMGT} satisfy
	$(p_0, p_1, p_2) \in \spaceW_0=\Honetwo\times\Honetwo\times H_0^1(\Omega).$ \vspace*{1mm}
	\item[($A_4$)] The source term satisfies \eq{f \in L^2(0,T; H_0^1(\Omega)) \cap L^\infty(0,T ;L^2(\Omega)).} \vspace*{1mm}
\end{itemize}
Note that since possibly $\ulal<0$, we do not make a non-degeneracy assumption on the coefficient $\alpha$ here. As already mentioned, this stems from the fact that we are interested in the short-time behavior of solutions. For an estimate in weaker norms, we will replace ($A_3$) and ($A_4$) by the following weaker assumptions. \vspace*{1mm}
\begin{itemize}
	\item[$\widetilde{(A_3)}$] The initial conditions \eqref{IC:LinWestJMGT} satisfy
	$(p_0, p_1, p_2) \in H_0^1(\Omega)\times H_0^1(\Omega)\times\Ltwo.$ \vspace*{1mm}
	\item[$\widetilde{(A_4)}$] The source term satisfies \eq{f \in \LtwoLtwo.} \vspace*{1mm}
\end{itemize}
We next prove a well-posedness result with a uniform bound in $\delta$ for \eqref{LinWestJMGT}. Since we are interested in the limit as $\delta\to0^+$, we may restrict our attention to a bounded interval $[0,\overline{\delta}]$ for some $\overline{\delta}>0$ without loss of generality.
\begin{proposition}\label{propLinWest}
Let assumptions \textup{($A_1$)--($A_4$)} hold and let $\tau$, $c$, $\overline{\delta}>0$. Then for $\delta \in [0, \overline{\delta}]$, the initial boundary-value problem \eqref{IBVP_LinWestJMGT} has a unique solution
\begin{equation}\label{regularityWest}
p \in \spaceW= W^{3,\infty}(0,T;\Ltwo)\cap W^{2,\infty}(0,T;H_0^1(\Omega))\cap W^{1,\infty}(0,T;\Honetwo).
\end{equation}	
Furthermore, this solution satisfies
\begin{equation}\label{enest1_linwest}
\begin{aligned}
&\nLtwo{\nabla p_{tt}(t)}^2+\nLtwo{\D p_t(t)}^2+\nLtwo{\D p(t)}^2 \\
\leq&\,\begin{multlined}[t] C(\tau)e^{K(\tau)(R^2+1)T}
\left(\nLtwo{\nabla p_2}^2 + \nLtwo{\D p_1}^2  + \nLtwo{\D p_0}^2 \right.  \left. +\nLtwoLtwo{\nabla f}^2\right), \end{multlined}
\end{aligned}
\end{equation}	
where the constants $C(\tau)$ and $K(\tau)$ tend to $+\infty$ as $\tau\to0^+$, but are independent of $R$, the final time $T$, and the sound diffusivity $\delta$.\\
\indent If instead of assumptions \textup{($A_3$)} and \textup{($A_4$)}, the weaker \textup{$\widetilde{(A3)}$} and \textup{$\widetilde{(A_4)}$} hold, any solution $p$ of \eqref{IBVP_LinWestJMGT} (if it exists) satisfies the estimate 
\begin{equation}\label{enest0_linwest}
\begin{aligned}
&\nLtwo{p_{tt}(t)}^2+\nLtwo{\nabla p_t(t)}^2+\nLtwo{\nabla p(t)}^2 \\
\leq&\, \begin{multlined}[t] C(\tau)e^{K(\tau)(R^2+1)T}
\left(\nLtwo{p_2}^2 + \nLtwo{\nabla p_1}^2  + \nLtwo{\nabla p_0}^2 \right.  \left.
+\nLtwoLtwo{f}^2\right). \end{multlined}
\end{aligned}
\end{equation}	
\end{proposition}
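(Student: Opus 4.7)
The plan is to construct the solution via Faedo--Galerkin approximation with the $L^2$-orthonormal basis $\{\varphi_j\}\subset\Honetwo$ of Dirichlet-Laplacian eigenfunctions. The Galerkin ansatz $p^n=\sum_{j=1}^n c_j^n(t)\varphi_j$, with initial data projected appropriately, satisfies a linear third-order ODE system that admits a global solution on $[0,T]$. Both energy bounds will arise as uniform-in-$\delta$ a-priori estimates on $p^n$ and transfer to the limit $n\to\infty$ by weak-$\ast$ compactness together with linearity of the equation; uniqueness then follows from \eqref{enest0_linwest} applied to the difference of two solutions.

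For \eqref{enest0_linwest} I would test the Galerkin equation with $p_{tt}^n$. After integration by parts in space (permitted because $p_{tt}^n|_{\partial\Omega}=0$) this yields
\[
\ddt\Bigl[\tfrac{\tau}{2}\nLtwo{p_{tt}^n}^2+\tfrac{\delta+\tau c^2}{2}\nLtwo{\nabla p_t^n}^2\Bigr]+\intO\alpha(p_{tt}^n)^2\dx+c^2\intO\nabla p^n\cdot\nabla p_{tt}^n\dx=\intO(\mu p_t^n+\eta p^n+f)p_{tt}^n\dx.
\]
The offending cross term $c^2\intO\nabla p^n\cdot\nabla p_{tt}^n\dx$ is absorbed by enriching the energy with $c^2\intO\nabla p^n\cdot\nabla p_t^n\dx$ plus a small multiple of $\nLtwo{\nabla p^n}^2$: differentiating the first addition produces precisely the cross term together with $c^2\nLtwo{\nabla p_t^n}^2$, while Young's inequality on the indefinite mixed contribution restores comparability of the enriched functional with $E(t)=\tfrac{\tau}{2}\nLtwo{p_{tt}^n}^2+\tfrac{\delta+\tau c^2}{2}\nLtwo{\nabla p_t^n}^2+\tfrac{c^2}{2}\nLtwo{\nabla p^n}^2$ up to constants depending on $\tau$ but not on $\delta$ (crucially, $\tau c^2>0$ supplies the $\delta$-independent coercivity that makes this absorption possible). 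The indefinite quadratic is split as $\intO\alpha(p_{tt}^n)^2\dx=\nLtwo{p_{tt}^n}^2+\intO(\alpha-1)(p_{tt}^n)^2\dx$ with the perturbation bounded by $\CHtwo R\,\nLtwo{p_{tt}^n}^2$ via $\Honetwo\hookrightarrow L^\infty$; the $\mu$, $\eta$, $f$ contributions are controlled by Hölder--Young using $\mu\in L^\infty(\Honetwo)$, $\eta\in L^\infty(H_0^1)$, and $H_0^1\hookrightarrow L^6$. Gronwall's lemma closes the argument.

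For \eqref{enest1_linwest} I would proceed by testing with $-\D p_{tt}^n$ (admissible in the eigenfunction basis), obtaining the $\D$-level analogue. The cross term $c^2\intO\D p^n\D p_{tt}^n\dx$ is absorbed by the same device, enlarging the energy with $c^2\intO\D p^n\D p_t^n\dx$ plus a small multiple of $\nLtwo{\D p^n}^2$. Spatial integration by parts of the variable-coefficient products generates commutators $\intO\nabla(\alpha p_{tt}^n)\cdot\nabla p_{tt}^n\dx$, $\intO\nabla(\mu p_t^n)\cdot\nabla p_{tt}^n\dx$, and $\intO\nabla(\eta p^n)\cdot\nabla p_{tt}^n\dx$, which are controlled via $\alpha\in W^{1,\infty}(\Honetwo)$, $\mu\in L^\infty(\Honetwo)$, $\eta\in L^\infty(H_0^1)$ combined with $H^2\hookrightarrow L^\infty$ and Sobolev product inequalities. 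The source contributes $\intO\nabla f\cdot\nabla p_{tt}^n\dx$ (using $f\in L^2(H_0^1)$), Young--absorbed to yield $\nLtwoLtwo{\nabla f}^2$. A further Gronwall argument then delivers \eqref{enest1_linwest} with $\delta$-independent constants.

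The main obstacle is to accommodate, uniformly in $\delta\in[0,\overline\delta]$, two potentially non-coercive contributions to the energy balance: the indefinite $\intO\alpha(p_{tt}^n)^2\dx$ (since $\ulal$ may be negative under $(A_2)$) and the cross term produced by pairing the purely spatial multiplier $c^2\D p$ against a test function involving a second time derivative. Both must be absorbed through a Gronwall argument that leaves an energy comparable to $E(t)$ on the left, and the key observation enabling $\delta$-independence is that $\tau c^2>0$ already furnishes the coercivity on $\nLtwo{\nabla p_t^n}^2$ (respectively $\nLtwo{\D p_t^n}^2$) required to carry out the absorption without recourse to the vanishing diffusivity parameter.
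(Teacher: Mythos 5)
Your proposal is sound and sits in the same Faedo--Galerkin/energy/Gronwall framework as the paper, but the multipliers differ: the paper tests with $(\tau p_t+p)_t=\tau p_{tt}+p_t$ and with $-\D(\tau p_t+p)_t$, exploiting the wave structure in $z=\tau p_t+p$ so that the damping term $-(\delta+\tau c^2)\D p_t$ exactly cancels the $c^2\nLtwo{\nabla p_t}^2$ (resp.\ $c^2\nLtwo{\D p_t}^2$) left over from the cross term, and the remaining indefinite pieces appear only as total time derivatives such as $-\tau c^2\ddt\prodLtwo{\D p}{\D p_t}$, absorbed by Young's inequality after time integration. You instead test with $p_{tt}$ and $-\D p_{tt}$ and compensate with an enriched, equivalent energy; this works for the same structural reason you identify, namely that the $\tau c^2$ part of the damping coefficient supplies $\delta$-independent coercivity on $\nLtwo{\nabla p_t}^2$, resp.\ $\nLtwo{\D p_t}^2$, so the leftover $-c^2\nLtwo{\nabla p_t}^2$ can be handled by Gronwall with constants that blow up only as $\tau\to0^+$, exactly as the proposition allows; your treatment of the indefinite $\alpha$-term and of the commutator and source terms matches the paper's bounds \eqref{est_f1f2}. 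Two minor points to fix when writing this up: the multiple of $\nLtwo{\nabla p}^2$ (resp.\ $\nLtwo{\D p}^2$) you add to the energy must be of size roughly $c^2/\tau$, i.e.\ $\tau$-dependent rather than ``small'', in order for the enriched functional to dominate $E[p]$ (what matters, and what you do have, is that it is independent of $\delta$); and the claimed regularity $p\in\spaceW$ includes $W^{3,\infty}(0,T;\Ltwo)$, which does not follow from the two energy estimates alone --- as in the paper, the bound on $\nLinfLtwo{p_{ttt}}$ must be read off from the (Galerkin) equation using $f\in L^\infty(0,T;\Ltwo)$ from ($A_4$) before passing to the limit, and the initial data are then attained (weakly) by the usual continuity-in-time argument.
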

\begin{proof}
 The proof can be conducted by employing smooth Faedo--Galerkin approximations in space. In particular, we can project the problem onto the span $V_n$ of the first $n$ eigenfunctions of the Dirichlet Laplacian pointwise in time; cf.~\cite{roubivcek2013nonlinear, evans2010partial}. We will focus here on deriving the crucial energy bound for the Galerkin approximations $p^n$ and refer to, for example,~\cite{KaltenbacherNikolic, kaltenbacher2020parabolic} for details regarding the application of the Faedo--Galerkin procedure in nonlinear acoustics. To ease the notation, we drop the superscript $n$ below and use just $p$. \\
 \indent In view of the fact that this linear equation is close to a wave equation for $z=\tau p_t+p$, we test it with $-\D (\tau p_t+p)_t$ and use integration by parts with respect to space. Noting that $p=\D p=0$ on $\partial \Om$ for sufficiently smooth Galerkin approximations, 
we can integrate by parts with respect to space to obtain
\begin{equation} \label{energy_id}
\begin{aligned}
&\begin{multlined}[t]\frac12 \tau^2\ddt \nLtwo{\nabla p_{tt}}^2
+\tau (\prodLtwo{\alpha\nabla p_{tt}}{\nabla p_{tt}}-\nLtwo{\nabla p_{tt}}^2)+ \frac12c^2 \ddt \nLtwo{\D p}^2
\\+ \frac12\tau(\delta+\tau c^2)\ddt \nLtwo{\D p_t}^2
+\frac12\ddt \prodLtwo{\alpha\nabla p_t}{\nabla p_t}
+\delta \nLtwo{\D p_t}^2 \end{multlined}\\
=&\,\begin{multlined}[t] \prodLtwo{f_1(p)+f_2(p)+\nabla f}{\nabla (\tau p_{tt}+p_t)} -\frac12 \prodLtwo{\alpha_t\nabla p_t}{\nabla p_t} \\
-\tau c^2 \ddt \prodLtwo{\D p}{\D p_t}
 -\tau \ddt \prodLtwo{\nabla p_{tt}}{\nabla p_t},\end{multlined}
\end{aligned}
\end{equation}
where 
$f_1(p)= p_t\,\nabla \mu +  \mu\, \nabla p_t + p_{tt}\,\nabla \alpha$ and $f_2(p)= p\nabla\eta+\eta\nabla p$ satisfy
\begin{equation} \label{est_f1f2}
\begin{aligned}
&\nLtwo{f_1(p)}\leq (\CHone^2+\CHtwo)\nLtwo{\D \mu}\nLtwo{\nabla p_t} 
+ \CHone^2\nLtwo{\D \alpha}\nLtwo{\nabla p_{tt}},\\
&\nLtwo{f_2(p)}\leq (\CHtwo +  \CHone^2)
\nLtwo{\nabla\eta}\nLtwo{\D p}.
\end{aligned}
\end{equation}
We next integrate \eqref{energy_id} with respect to time and use Young's inequality to arrive at the following energy estimate:
\begin{equation}
\begin{aligned}
&\begin{multlined}[t]\tau^2\frac{1-\epsilon}{2} \nLtwo{\nabla p_{tt}(t)}^2
+\frac12 \tau(\delta+\tau \frac{c^2}{2}) \nLtwo{\D p_t(t)}^2
+\delta \int_0^t \nLtwo{\D p_t}^2\ds 
+ \frac12c^2 \nLtwo{\D p(t)}^2 
\end{multlined}\\
\leq&\, \begin{multlined}[t] \frac12 \tau^2\nLtwo{\nabla p_{tt}(0)}^2
+ \frac12 \tau(\delta+\tau c^2) \nLtwo{\D p_t(0)}^2 
+\tau c^2 \prodLtwo{\D p(0)}{\D p_t(0)} \\
+\tau \prodLtwo{\nabla p_{tt}(0)}{\nabla p_t(0)}
+\prodLtwo{\alpha(0)\nabla p_t(0)}{\nabla p_t(0)}
+\frac12 c^2 \nLtwo{\D p(0)}^2\\
+\frac12\int_0^t\nLtwo{\nabla p_t}^2\ds
+\tau (\|\mu\|_{L^\infty(L^\infty)}+1) \int_0^t \nLtwo{\nabla p_{tt}}^2\ds 
+\frac12\left(\epsilon^{-1}-\ulal\right) \nLtwo{\nabla p_t(t)}^2\\
\hspace*{-1.1cm}+\int_0^t\left(\frac{\tau}{2}\nLtwo{f_1(p)+f_2(p)}^2 + \nLtwo{f_1(p)+f_2(p)-\frac12 \alpha_t\, \nabla p_t}^2\right)\ds
+\frac{\tau\!+\!2}{2}\int_0^t\nLtwo{\nabla f}^2\ds, \end{multlined}
\end{aligned}
\end{equation}
where $\epsilon \in (0,1)$.     mFurthermore, \eq{\nLtwo{\alpha_t\, \nabla p_t}\leq \CHtwo \nLtwo{\D \alpha_t}\nLtwo{\nabla p_t}} and 
\[\nLtwo{\nabla p_t(t)}^2=\nLtwo{\nabla p_t(0)+\int_0^t \nabla p_{tt}\ds}^2\leq 2\nLtwo{\nabla p_t(0)}^2 + 2t\int_0^t\nLtwo{\nabla p_{tt}}^2\ds.
\]
Gronwall's inequality therefore yields 
\begin{equation}
\begin{aligned}
&\tau^2\nLtwo{\nabla p_{tt}(t)}^2+\tau^2c^2\nLtwo{\D p_t(t)}^2+c^2\nLtwo{\D p(t)}^2 \\
\leq&\, Ce^{K(\tau)(R^2+1)T} \big(\nLtwo{\nabla p_{tt}(0)}^2 + \nLtwo{\D p_t(0)}^2  + \nLtwo{\D p(0)}^2
+\nLtwoLtwo{\nabla f}^2\big),
\end{aligned}
\end{equation}
with a constant $C>0$ that only depends on the medium parameters $\tau$, $c^2$, and the embedding constants $\CHone$ and $\CHtwo$, but is independent of $\delta\in[0,\bar{\delta}]$. This further yields \eqref{enest1_linwest}, at first in its semi-discrete version.

Additionally, from the (Galerkin-discretized) PDE, we obtain 
\begin{equation} \label{bound_pttt}
\begin{aligned}
\nLtwo{p_{ttt}(t)}^2 
\leq&\,\begin{multlined}[t] \frac{1}{\tau} \nLtwo{p_{ttt}(t)} \nLtwo{\alpha(t) p_{tt}(t) - (\delta +\tau c^2) \D p_t(t) -c^2\D p(t)\\ - \mu(t) p_t(t) - \eta(t) p(t) - f(t)}. \end{multlined}
\end{aligned}
\end{equation}
By the semi-discrete version of \eqref{enest1_linwest}, the above inequality implies that also
\begin{equation} \label{est_pttt}
\nLinfLtwo{p_{ttt}}\leq \begin{multlined}[t] \tilde{C}(T,R,\tau,c) 
\big(\nLtwo{\nabla p_{tt}(0)}^2 + \nLtwo{\D p_t(0)}^2  + \nLtwo{\D p(0)}^2
\big. \\ \big.+\nLtwoLtwo{\nabla f}^2  +\nLinfLtwo{f} \big) \end{multlined}
\end{equation}
and the PDE is satisfied in an $L^\infty(0,T;L^2(\Omega))$ sense. The obtained semi-discrete bounds carry over to the solution of \eqref{IBVP_LinWestJMGT} via standard compactness arguments; cf.~\cite[Theorem 3.1]{KaltenbacherNikolic} and~\cite[Proposition 3.1]{kaltenbacher2020parabolic}. We omit the details here. By~\cite[Lemma 3.3]{temam2012infinite}, it follows from $p \in \spaceW$ that
\begin{equation}
\begin{aligned}
p \in\, C([0,T]; \Honetwo),
\ p_t \in\, C_{w}([0,T]; \Honetwo),\
p_{tt} \in\, C_{w}([0,T]; H_0^1(\Om)),
\end{aligned}
\end{equation}
and thus initial conditions $p_1$ and $p_2$ are attained weakly. \\
\indent The second part of the proof is concerned with the bound \eqref{enest0_linwest} in the weaker norm, which can be obtained by testing with $(\tau p_t+p)_t$ in place of $-\D(\tau p_t+p)_t$. At first, this yields the energy identity
\begin{equation}
\begin{aligned}
&\begin{multlined}[t]\frac12 \tau^2\ddt \nLtwo{p_{tt}}^2
+\tau (\prodLtwo{\alpha p_{tt}}{p_{tt}}-\nLtwo{p_{tt}}^2)
+ \frac12\tau(\delta+\tau c^2)\ddt \nLtwo{\nabla p_t}^2\\
+\frac12\ddt \prodLtwo{\alpha p_t}{p_t}
+\delta \nLtwo{\nabla p_t}^2
+ \frac12c^2 \ddt \nLtwo{\nabla p}^2 \end{multlined}\\
=&\,\begin{multlined}[t] \prodLtwo{\mu p_t+f}{\tau p_{tt}+p_t} -\frac12 \prodLtwo{\alpha_t p_t}{p_t} 
-\tau c^2 \ddt \prodLtwo{\nabla p}{\nabla p_t}
 -\tau \ddt \prodLtwo{p_{tt}}{p_t}.\end{multlined}
\end{aligned}
\end{equation}
Usual computations with the the right-hand side terms then lead to \eqref{enest0_linwest}.  
\end{proof}
Introducing the energy of the solution at time $t \in [0,T]$ as
\begin{equation}\label{energy}
E[p](t)=\frac{\tau^2}{2}\nLtwo{p_{tt}(t)}^2+\frac{\tau^2 c^2}{2} \nLtwo{\nabla p_t(t)}^2 +\frac{c^2}{2} \nLtwo{\nabla p(t)}^2
\end{equation}
we see that on one hand, it dominates the physical energy for the auxiliary function $z=\tau p_t + p_t$:
\begin{equation}\label{Etil}
\tilde{E}[z](t)=\frac{1}{2}\nLtwo{z_t(t)}^2+\frac{c^2}{2} \nLtwo{\nabla z(t)}^2.
\end{equation} 
On the other hand, by \eqref{enest0_linwest}, it satisfies an energy estimate of the form
\begin{equation}
E[p](t)\leq C(T,R,\tau) \, E[p](0) + \int_0^t \nLtwo{f(s)}\ds.
\end{equation}
We will use $E[p]$ to establish the convergence rate as $\delta\to0^+$ in the space 
\begin{align} \label{spaceE}
\textup{E} = W^{2,\infty}(0,T;\Ltwo)\cap W^{1,\infty}(0,T;H_0^1(\Omega))
\end{align}
induced by this energy.
\begin{theorem} \label{Thm:West_WeakLimit_lin}
Under the conditions of Proposition~\ref{propLinWest}, the family of solutions $\{p^{(\delta)}\}_{\delta>0}$ to the generalized Moore--Gibson--Thompson equation converges in the topology induced by the energy norm for the wave equation to the solution $p$ of the inviscid equation as $\delta\to0^+$ at a linear rate. 
\end{theorem}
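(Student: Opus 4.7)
The plan is to apply the stability estimate \eqref{enest0_linwest} from Proposition~\ref{propLinWest} to the difference $w = p^{(\delta)} - p$, where $p = p^{(0)}$ is the solution of the inviscid problem (obtained by applying Proposition~\ref{propLinWest} at $\delta = 0$, which is allowed since $\delta \in [0,\overline{\delta}]$). Both $p^{(\delta)}$ and $p$ then lie in $\spaceW$ and satisfy the higher-order bound \eqref{enest1_linwest} with constants independent of $\delta$; in particular $\|\D p_t\|_{L^\infty(L^2)}$ is uniformly bounded.

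Subtracting the equation satisfied by $p$ from that satisfied by $p^{(\delta)}$, the difference $w$ solves
\begin{equation*}
\tau w_{ttt} + \alpha w_{tt} - (\delta +\tau c^2) \D w_t -c^2\D w - \mu w_t - \eta w = \delta\, \D p_t,
\end{equation*}
with homogeneous Dirichlet boundary conditions and vanishing initial data $(w,w_t,w_{tt})|_{t=0} = (0,0,0)$. The right-hand side belongs to $\LtwoLtwo$ with
\begin{equation*}
\|\delta\, \D p_t\|_{L^2(L^2)} \le \delta\sqrt{T}\,\|\D p_t\|_{L^\infty(L^2)} = O(\delta).
\end{equation*}

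Applying the weaker-norm estimate \eqref{enest0_linwest} to $w$, whose initial data are zero and whose source is $\delta\,\D p_t$, yields
\begin{equation*}
\|w_{tt}(t)\|_{L^2}^2 + \|\nabla w_t(t)\|_{L^2}^2 + \|\nabla w(t)\|_{L^2}^2 \le C(\tau)\, e^{K(\tau)(R^2+1)T}\, T\, \|\D p_t\|_{L^\infty(L^2)}^2\, \delta^2 .
\end{equation*}
In terms of the energy introduced in \eqref{energy}, this reads $E[w](t) \lesssim \delta^2$ uniformly in $t \in [0,T]$, which is exactly $\|p^{(\delta)} - p\|_{\textup{E}} \lesssim \delta$ in the space \eqref{spaceE}.

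The only point that requires attention is the justification that \eqref{enest0_linwest} applies to $w$. This is not a serious obstacle: the regularity $p^{(\delta)},\, p \in \spaceW$ provided by Proposition~\ref{propLinWest} ensures that $w$ is an admissible solution of a problem of the form \eqref{IBVP_LinWestJMGT} in the sense of the second part of that proposition, and the hypotheses $\widetilde{(A_3)}$ and $\widetilde{(A_4)}$ hold trivially for $w$ (zero initial data and $L^2(L^2)$ source). The $\delta$-independence of the constants $C(\tau)$ and $K(\tau)$ is precisely what Proposition~\ref{propLinWest} guarantees, so the $O(\delta)$ rate follows at once.
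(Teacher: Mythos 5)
Your proof is correct and is essentially the paper's own argument: both apply the weaker-norm estimate \eqref{enest0_linwest} to the difference equation with zero initial data and source proportional to $\delta\,\D p_t$ of the reference solution, and bound that source uniformly via the higher-order estimate \eqref{enest1_linwest}. The only cosmetic difference is that the paper writes the difference for general $\delta,\delta'\in[0,\overline{\delta}]$ (a Cauchy-type formulation) and sets $\delta'=0$ at the end, whereas you compare directly with the inviscid solution $p^{(0)}$, which Proposition~\ref{propLinWest} indeed provides since $\delta=0$ is admissible.
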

\begin{proof}
Let $\delta$, $\delta' \in [0, \overline{\delta}]$. Furthermore, let $p^{(\delta)}$ and $p^{(\delta')}$ be the solutions of \eqref{IBVP_LinWestJMGT} with the sound diffusivity $\delta$ and $\delta'$, respectively. We follow the general strategy of~\cite{kato1972nonstationary, tani2017mathematical, kaltenbacher2020parabolic, showalter1976regularization} by proving that $\{\op^\delta\}$ is a Cauchy sequence in suitable topology, where $\op=p^{(\delta)}-p^{(\delta')}$. We note that $\op$ solves the equation
\begin{equation} \label{WestJMGT_Cauchy_eq_lin}
\begin{aligned}
&\tau \op_{ttt}+\alpha\op_{tt}-(\delta+\tau c^2) \Delta \op_t-c^2 \Delta \op - \mu \op_t -\eta \op
= (\delta-\delta')\Delta p_t^{(\delta')}
\end{aligned}
\end{equation}
supplemented by zero initial conditions. 
Applying estimate \eqref{enest0_linwest} in Proposition~\ref{propLinWest} with $f=(\delta-\delta')\Delta p_t^{(\delta')}$ directly yields
\begin{equation}
E[p](t)\leq  C(\tau)\exp(K(\tau)(R^2+1)T) \, |\delta-\delta'|^2 \, \bar{R}^2
\end{equation}	
with energy $E[p]$ defined as in \eqref{energy}. 
Here we have used the bound 
\[
\begin{aligned}
\bar{R}^2=& \, C(\tau)e^{K(\tau)(R^2+1)T} \left(\nLtwo{\nabla p_2}^2 + \nLtwo{\D p_1}^2  + \nLtwo{\D p_0}^2+\nLtwoLtwo{\nabla f}^2\right)
\end{aligned}
\] 
on $\nLtwoLtwo{\Delta p_t^{(\delta')}}$, resulting from \eqref{enest1_linwest}. The stated rate is obtained by setting $\delta'$ to zero.
\end{proof}
\begin{remark}[Perturbation of the wave speed] \label{Remark:PerturbedSpeed}
Regularizing perturbations of the second-order linear wave equations have been studied thoroughly in the literature; see, for example,~\cite[\S \,8.5]{lions2012non} and~\cite{showalter1976regularization}. In view of the fact that \eqref{LinWestJMGT} can be seen as a second-order wave equation for $z=\tau p_t+p$, we compare these known results to Theorem~\ref{Thm:West_WeakLimit_lin} above.
To this end, for simplicity, we set $\alpha=1$, $\mu=\eta=f=0$, which yields
\begin{equation}\label{zwave}
z_{tt}-c^2\D z - \frac{\delta}{\tau} \D z +\frac{\delta}{\tau}\D p=0\,.
\end{equation}
It thus becomes apparent that our results are not covered by those in~\cite{showalter1976regularization, lions2012non}, which involve parabolic and non-singular perturbations; cf.~\cite[Theorem 8.3]{lions2012non} and ~\cite[Propositions 7 and 9]{showalter1976regularization}. In fact, equation \eqref{zwave} reveals that we do not deal with damping, but rather with a perturbation of the wave speed $c$.
\end{remark}
\begin{remark}[Heterogeneous media]
When besides $\alpha$, $\mu$, $\eta$, also $\tau$, $\delta$, $k$, and $c$ are space (and possibly time) dependent coefficients, all statements in this section remain valid, as long as these coefficients are sufficiently smooth and $c$ and $\tau$ are bounded away from zero. In particular, an inspection of the proofs shows that it suffices to have an $L^\infty$ sound speed satisfying \eq{0<\underline{c}\leq c(x)\leq \overline{c}} for reproducing the higher-order energy estimate \eqref{enest1_linwest}. This allows for the practically relevant setting of piecewise-constant sound speed. Note that for obtaining the lower-order energy estimate \eqref{enest0_linwest}, integration by parts with respect to space requires existence and a certain integrability of the gradient of $c^2$, though. 
\end{remark}

\section{Uniform bounds for the JMGT--Westervelt equation and the inviscid limit} \label{Sec:FixedPoint_JMGTWest_Limit}
We next wish to extend the study of the limiting behavior of the solutions to the nonlinear JMGT--Westervelt equation given by \eqref{JMGT_West}. To derive $\delta$-independent bounds on solutions to \eqref{JMGT_West}, we employ a fixed-point argument on the mapping \eq{\TW:q\mapsto p,} which associates $q$ with the solution of the linearized equation
\begin{equation}
\tau p_{ttt} + \alpha p_{tt} - (\delta +\tau c^2) \D p_t -c^2\D p - k q_t p_t = 0
\end{equation}
with homogeneous Dirichlet data and initial conditions \eq{ (p, \pt, \ptt)\vert_{t=0}=(p_0, p_1, p_2).} Above, we have set $\alpha=1-kq$ and taken
\begin{equation} \label{defBR}
q\in B_R^{\textup{W}}:=\{q\in \spaceW\, : \|q\|_{\spaceW}\leq R\mbox{ and } q(0)=p_0, \, q_t(0)=p_1, \, q_{tt}(0)=p_2
\},
\end{equation}
with some appropriately chosen radius $R>0$; recall that $\spaceW$ is defined in \eqref{regularityWest}. More precisely, crucial for our well-posedness proof will be the existence of $R>0$, such that the bounds
\begin{eqnarray}\label{smallness1}
\sqrt{C(\tau)\exp(K(\tau)(R^2+1)T)}\ r\leq R\\
\label{smallness2}
\theta=\left(\CHtwo+\CHone^2\right) C(\tau)\exp(K(\tau)(R^2+1)T) \, r |k|  \sqrt{T}<1
\end{eqnarray}
hold. By showing that $\TW$ is a self-mapping and contraction on  $B_R^\textup{W}$, we obtain the following result.
\begin{theorem}\label{Thm:West_Wellposedness}
Let assumption $(A_1)$ hold and let $\tau$, $c>0$, and $k \in \R$. Furthermore, let $r$ and $T$ be such that assumptions \eqref{smallness1} and \eqref{smallness2} hold for some $R>0$. Then for any initial data $(p_0, p_1, p_2)\in \spaceW_0$ satisfying
\begin{equation}
\nLtwo{\nabla p_2}^2 + \nLtwo{\D p_1}^2  + \nLtwo{\D p_0}^2\leq r^2
\end{equation}
and any $\delta\in[0,\bar{\delta}]$, there exists a unique solution $p\in\spaceW$ of the corresponding initial boundary-value problem for the JMGT--Westervelt equation \eqref{JMGT_West}, where $\spaceW$ is defined in \eqref{regularityWest}. The solution $p$ satisfies the estimate 
\begin{equation}
\begin{aligned}
&\nLtwo{\nabla p_{tt}(t)}^2+\nLtwo{\D p_t(t)}^2+\nLtwo{\D p(t)}^2 \\
\leq&\, C(\tau)e^{K(\tau)(R^2+1)T}
\left(\nLtwo{\nabla p_2}^2 + \nLtwo{\D p_1}^2  + \nLtwo{\D p_0}^2\right),
\end{aligned}
\end{equation}	
where the constants $C(\tau)$ and $K(\tau)$ tend to $+\infty$ as $\tau\to0^+$, but are independent of $\delta$.
\end{theorem}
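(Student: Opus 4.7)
The proof plan is to apply Banach's fixed-point theorem to the linearization map $\TW$, viewing $B_R^{\textup{W}}$ as a complete metric space under the weaker metric induced by the norm on $\textup{E}$ defined in \eqref{spaceE}. The set $B_R^{\textup{W}}$ is convex and $\textup{E}$-closed by weak-$*$ lower semicontinuity of the $\spaceW$-norm, and nonempty thanks to the smallness conditions \eqref{smallness1}--\eqref{smallness2} (which force $R\geq r$). For any $q\in B_R^{\textup{W}}$, the linear problem $\TW(q)=p$ falls into the template \eqref{IBVP_LinWestJMGT} with the identifications $\alpha=1-kq$, $\mu=kq_t$, $\eta=0$, $f=0$, so assumption $(A_2)$ is met with a bounding constant of order $|k|R$, and Proposition~\ref{propLinWest} supplies a unique $\TW(q)\in\spaceW$.

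For the self-mapping property, I would apply the higher-order estimate \eqref{enest1_linwest} with $f=0$ together with the initial-data bound, which gives
\[
\|\TW(q)\|_{\spaceW}^2 \leq C(\tau)\exp(K(\tau)(R^2+1)T)\,r^2 \leq R^2,
\]
the last inequality being precisely \eqref{smallness1}. Since $\TW(q)$ attains the data $(p_0,p_1,p_2)$ by construction, $\TW(q)\in B_R^{\textup{W}}$, and this estimate will serve, once applied to the fixed point, as the energy bound stated in the theorem.

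For the contraction, let $q_1,q_2\in B_R^{\textup{W}}$, set $p_i=\TW(q_i)$, and form $\bar p=p_1-p_2$. Subtracting the two linearized equations, $\bar p$ satisfies \eqref{LinWestJMGT} with coefficients $\alpha=1-kq_1$, $\mu=k(q_1)_t$, $\eta=0$, zero initial data, and source
\[
f_{\mathrm{diff}}=k(q_1-q_2)\,p_{2,tt}+k(q_1-q_2)_t\,p_{2,t}.
\]
The lower-order estimate \eqref{enest0_linwest} then bounds $\|\bar p\|_{\textup{E}}^2$ by $C(\tau)\exp(K(\tau)(R^2+1)T)\,\nLtwoLtwo{f_{\mathrm{diff}}}^2$. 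I would estimate the two pieces of $f_{\mathrm{diff}}$ pointwise in time via H\"older's inequality combined with the embeddings $H^1\hookrightarrow L^4$ and $H^2\hookrightarrow L^\infty$:
\begin{align*}
\nLtwo{(q_1-q_2)\,p_{2,tt}} &\leq \CHone^2\,\nHone{q_1-q_2}\,\nHone{p_{2,tt}},\\
\nLtwo{(q_1-q_2)_t\,p_{2,t}} &\leq \CHtwo\,\nLtwo{(q_1-q_2)_t}\,\nHtwo{p_{2,t}}.
\end{align*}
Passing from $L^\infty(0,T)$ to $L^2(0,T)$ in time produces a factor $\sqrt T$; inserting the self-mapping bound $\|p_2\|_{\spaceW}\leq\sqrt{C(\tau)\exp(K(\tau)(R^2+1)T)}\,r$ for the $p_2$-factors and combining with the prefactor from \eqref{enest0_linwest} yields
\[
\|\bar p\|_{\textup{E}}\leq \theta\,\|q_1-q_2\|_{\textup{E}}
\]
with $\theta$ exactly as in \eqref{smallness2}, hence $\theta<1$. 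Banach's theorem then supplies the required unique fixed point $p\in B_R^{\textup{W}}$ solving the JMGT--Westervelt equation, and uniqueness in all of $\spaceW$ follows by running the same contraction computation on any two putative solutions.

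The main obstacle is the bookkeeping in the contraction step: one must track exactly how the $\spaceW$-bound on $p_2$ propagates into $\nLtwoLtwo{f_{\mathrm{diff}}}$ and combine it with the prefactor $\sqrt{C(\tau)\exp(K(\tau)(R^2+1)T)}$ from \eqref{enest0_linwest} so as to land precisely on the parameter $\theta$ of \eqref{smallness2}. A secondary but worth-stating point is that although $B_R^{\textup{W}}$ is specified in the stronger $\spaceW$-norm, the contraction is run in the weaker $\textup{E}$-metric, so closedness of $B_R^{\textup{W}}$ under the $\textup{E}$-topology (a standard weak-$*$ lower semicontinuity argument) must be verified in order to invoke Banach's theorem.
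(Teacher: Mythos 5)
Your proposal is correct and follows essentially the same route as the paper's proof: Banach's fixed-point theorem on $B_R^{\textup{W}}$ equipped with the weaker $\textup{E}$-metric, self-mapping via \eqref{enest1_linwest} under \eqref{smallness1}, contraction via \eqref{enest0_linwest} applied to the difference equation with source $k\overline{q}_t p^{(2)}_t + k\overline{q}\,p^{(2)}_{tt}$ yielding exactly the factor $\theta$ of \eqref{smallness2}, and the same weak-$\star$ closedness argument for the ball. The only detail the paper adds that you gloss over is invoking the bound \eqref{est_pttt} on $p_{ttt}$ so that the full $\spaceW$-norm (not just the quantities controlled by \eqref{enest1_linwest}) stays below $R$.
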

\begin{proof}
The proof follows by relying on the Banach Fixed-point theorem in combination with the linear result of Proposition~\ref{propLinWest}. Indeed, by employing estimates \eqref{enest1_linwest} \textcolor{mygreen}{and \eqref{est_pttt}} with $\alpha=1-k q$, $\mu=k q_t$, $\eta=0$, $f=0$, and using  \eq{W^{1,\infty}(0,T;\Honetwo)\subseteq\spaceW,} one immediately sees that $\TW$ is a well-defined self-mapping on $B_R^{\textup{W}}$, provided \eqref{smallness1} holds. \\
\indent We next prove that $\TW$ is strictly contractive with respect to the norm on the weaker space $\textup{E}$, defined in \eqref{spaceE}. To this end, we take $q^{(1)}$ and $q^{(2)}$ in $B_R^W$ and use the short-hand notation  $p^{(1)}=\TW q^{(1)}$ and $p^{(2)}=\TW q^{(2)} $. We also introduce the differences \eq{\overline{p}=p^{(1)} -p^{(2)}}, and \eq{\overline{q}= q^{(1)} -q^{(2)}.} Then we know that $\op$ solves the linear equation
\begin{equation} \label{West_contract_eq}
\tau \op_{ttt}+(1-k q^{(1)})\op_{tt}-c^2 \Delta \op-b \Delta \op_t - k q_t^{(1)}\op_t 
= k\overline{q}_tp^{(2)}_t+k \overline{q}p^{(2)} _{tt}
\end{equation}
and has zero initial conditions. 
Estimate \eqref{enest0_linwest} with $\alpha=1-k q^{(1)}$, $\mu=k q_t^{(1)}$, $\eta=0$, and $f= k\overline{q}_tp^{(2)}_t+k \overline{q}p^{(2)} _{tt}$ yields the bound
\begin{align}
\|\op\|_{\textup{E}}
\leq&\, 
\begin{multlined}[t]\sqrt{C(\tau)\exp(K(\tau)(R^2+1)T)}|k| \Bigl(\CHtwo\nLinfLtwo{\D p^{(2)}_t}\nLtwoLtwo{\overline{q}_t}\\
+\CHone^2\nLinfLtwo{\nabla p^{(2)}_{tt}}\nLtwoLtwo{\nabla\overline{q}}\Bigr) \leq \theta \|\oq\|_{\textup{E}}.\end{multlined}
\end{align}
Above, since $\overline{q}(0)=0$ and $\overline{q}_t(0)=0$, we have exploited the bound \[\nLtwoLtwo{\overline{q}_t} +\nLtwoLtwo{\nabla\overline{q}} \leq \sqrt{T} \|\oq\|_{\textup{E}}.\] 
We also know that
\[
\nLinfLtwo{\D p^{(2)}_t}+\nLinfLtwo{\nabla p^{(2)}_{tt}}\leq\sqrt{C(\tau)\exp(K(\tau)(R^2+1)T)} \, r.
\]
Thus we obtain strict contractivity provided condition \eqref{smallness2} holds. \\
\indent  Note that the space $B_R^{\textup{W}}$ with the metric induced by the norm $\|\cdot\|_{\textup{E}}$ is a closed subset of a complete normed space. This follows from $B_R^{\textup{W}}$ being a ball and thus weakly closed in $\spaceW$. Therefore, any Cauchy sequence with respect to the norm  $\|\cdot\|_{\textup{E}}$ converges to some $y \in \textup{E}$ and has a weakly-$\star$ in $\spaceW$ convergent subsequence with limit $x \in B_R^{\textup{W}}$. Since the limits are unique, it must be $y=x \in B_R^{\textup{W}}$.\\
\indent The claim then follows by Banach's Fixed-point theorem, which at first yields a unique solution in $B_R^{\textup{W}}$. Uniqueness in $\spaceW$ follows by arguing by contradiction and employing the stability estimate on the resulting equation for the difference of two solutions; see, for example,~\cite[Theorem 2.5.1]{zheng2004nonlinear} for similar arguments.
\end{proof}
\begin{remark}[Small data or short time for the JMGT--Westervelt equation]\label{rem:smallness}
To satisfy conditions \eqref{smallness1} and \eqref{smallness2}, we can either make the radius $r$ of the initial data small or the final time $T$ short. Let  $C(\Omega,\tau,k)=(\CHtwo+\CHone^2) C(\tau)|k|$. On the one hand, for given $T$, we can read these two conditions as
\begin{equation}\label{smallnessr}
r\leq \frac{R}{\sqrt{C(\tau)\exp(K(\tau)(R^2+1)T)}}, \quad
r < \frac{1}{C(\Omega,\tau,k)\exp(K(\tau)(R^2+1)T)\sqrt{T}}
\end{equation}
for some $R>0$. They can always be satisfied by, e.g., setting $R=1$ and choosing $r>0$ small enough. Note that this smallness condition can be weakened by maximizing the right-hand sides in \eqref{smallnessr} with respect to $R$. \\
\indent On the other hand, for given $r>0$, conditions \eqref{smallness1} and \eqref{smallness2} can be satisfied by choosing short enough final time so that 
\begin{equation}\label{smallnessT}
\begin{aligned}
&T<\left(C(\Omega,\tau,k)\, r\right)^{-2}, \\
&T\leq \min\left\{1,\,\min\{\ln(R^2/(r^2C(\tau))),\,\ln(1/(rC(\Omega,\tau,k)))\}/(K(\tau(R^2+1))\right\} 
\end{aligned}
\end{equation}
for some $R>r\sqrt{C(\tau)}$. Again, the radius can simply be fixed to, for example, \eq{R=r\sqrt{C(\tau)}+1.} A more sophisticated approach would be to optimize it to make the upper bounds in \eqref{smallnessT} as large as possible. The short-time setting might be more preferable having in mind applications of nonlinear ultrasonic waves, where the data are often smooth, but not necessarily small; see, for example, \cite[\S 14.6]{kaltenbacher2007numerical} for the numerical modeling of high-intensity ultrasonic waves used in lithotripsy. 
\end{remark}

We are now ready to prove a convergence result for the nonlinear equation \eqref{JMGT_West}\textcolor{blue}{,} as the sound diffusivity vanishes.
\begin{theorem} \label{Thm:West_WeakLimit}
Under the conditions of Theorem~\ref{Thm:West_Wellposedness}, the family of solutions $\{p^{(\delta)}\}_{\delta>0}$ to the JMGT--Westervelt equation converges in the topology induced by the energy norm \eqref{energy} to a solution $p$ of the inviscid JMGT--Westervelt equation as $\delta\to0^+$ at a linear rate. 
\end{theorem}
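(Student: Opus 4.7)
The plan is to linearize the JMGT--Westervelt equation around the inviscid solution and then invoke the lower-order energy estimate \eqref{enest0_linwest} of Proposition~\ref{propLinWest} with a source term of size $\delta$. Since Theorem~\ref{Thm:West_Wellposedness} admits $\delta=0$ within the range $[0,\bar\delta]$, the inviscid equation has a unique solution $p = p^{(0)} \in \spaceW$ satisfying the same uniform-in-$\delta$ $\spaceW$ bound as the viscous solutions $p^{(\delta)}$. I set $\op = p^{(\delta)} - p^{(0)}$.

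Rewriting the JMGT--Westervelt equation in the form
\[
\tau p_{ttt} + (1-kp)p_{tt} - (\delta + \tau c^2)\D p_t - c^2 \D p - k p_t^2 = 0
\]
for both $\delta$ and $0$ and subtracting yields, via the identities
\[
(1-kp^{(\delta)})p^{(\delta)}_{tt} - (1-kp^{(0)})p^{(0)}_{tt} = (1-kp^{(\delta)})\op_{tt} - k p^{(0)}_{tt}\op
\]
and $(p^{(\delta)}_t)^2 - (p^{(0)}_t)^2 = (p^{(\delta)}_t + p^{(0)}_t)\op_t$, that $\op$ satisfies the generalized MGT equation \eqref{LinWestJMGT}
\[
\tau \op_{ttt} + \alpha \op_{tt} - (\delta + \tau c^2) \D \op_t - c^2 \D \op - \mu \op_t - \eta \op = f
\]
with coefficients $\alpha = 1 - k p^{(\delta)}$, $\mu = k(p^{(\delta)}_t + p^{(0)}_t)$, $\eta = k p^{(0)}_{tt}$, source $f = \delta \D p^{(0)}_t$, and zero initial data (since $p^{(\delta)}$ and $p^{(0)}$ share the same initial triple).

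It then remains to check that hypotheses $(A_2)$ and $\widetilde{(A_4)}$ are satisfied with constants uniform in $\delta$. The inclusion $\spaceW \subset W^{1,\infty}(0,T;\Honetwo)$ controls both $\|1-\alpha\|_{W^{1,\infty}(\Honetwo)}$ and $\|\mu\|_{L^\infty(\Honetwo)}$, while $\spaceW \subset W^{2,\infty}(0,T;H_0^1(\Omega))$ controls $\nLinfLtwo{\nabla \eta}$, and all three are bounded by some $\tilde R = \tilde R(R,k)$ thanks to the uniform $\spaceW$ bound from Theorem~\ref{Thm:West_Wellposedness}. For the source,
\[
\nLtwoLtwo{f} \leq \delta\sqrt{T}\,\nLinfLtwo{\D p^{(0)}_t} \lesssim \delta,
\]
again uniformly in $\delta$. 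Applying \eqref{enest0_linwest} with vanishing initial data then gives
\[
E[\op](t) \leq C(\tau) e^{K(\tau)(\tilde R^2 + 1)T}\,\nLtwoLtwo{f}^2 \lesssim \delta^2,
\]
from which $\|\op\|_{\textup E} \lesssim \delta$ by the definition \eqref{spaceE} of $\textup E$ and the energy \eqref{energy}.

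The main obstacle is to guarantee that every constant in the final estimate is genuinely independent of $\delta$. This is precisely where the fixed-point bound of Theorem~\ref{Thm:West_Wellposedness} is indispensable: it produces a single $\spaceW$ bound valid for all $\delta \in [0,\bar\delta]$, which simultaneously dominates $\alpha$, $\mu$, $\eta$ by the same $\tilde R$ and furnishes the $\nLinfLtwo{\D p^{(0)}_t}$ bound used to pull out the $O(\delta)$ size of the source. Apart from this uniformity check, the argument is a direct linearization combined with the already-established linear convergence theorem.
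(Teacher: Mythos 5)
Your proposal is correct and follows essentially the same route as the paper: the paper likewise writes the difference of the two equations as a generalized MGT equation with $\alpha=1-kp^{(\delta)}$, $\mu=k(p_t^{(\delta)}+p_t^{(\delta')})$, $\eta=kp_{tt}^{(\delta')}$ and source $(\delta-\delta')\Delta p_t^{(\delta')}$, applies the lower-order estimate \eqref{enest0_linwest} with zero initial data, and obtains the rate by setting $\delta'=0$. Your only (harmless) deviations are comparing directly with the inviscid solution $p^{(0)}$ instead of first stating a Cauchy-in-$\delta$ argument, and spelling out the verification of $(A_2)$ and $\widetilde{(A_4)}$ uniformly in $\delta$, which the paper leaves implicit.
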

\begin{proof}
Let $\delta$, $\delta' \in [0, \overline{\delta}]$. Again we use the short-hand notations $p^{(\delta)}$ and $p^{(\delta')}$ for the solutions of \eqref{IBVP_LinWestJMGT} with $\delta$ and $\delta'$, as well as $\op=p^{(\delta)}-p^{(\delta')}$, respectively, and prove that  $\{\op^\delta\}$ is a Cauchy sequence in the topology induced by \eqref{energy}. Note that $\op$ solves the equation
\begin{equation} \label{WestJMGT_Cauchy_eq}
\begin{aligned}
&\tau \op_{ttt}+(1-k p^{(\delta)} )\op_{tt}-(\delta+\tau c^2) \Delta \op_t-c^2 \Delta \op \\
=& \, k\overline{p}_t (p_t^{(\delta)}+p^{(\delta')}_t)+k \overline{p}p^{(\delta')}_{tt}+(\delta-\delta')\Delta p_t^{(\delta')},
\end{aligned}
\end{equation}
supplemented by zero initial conditions. 
Applying estimate \eqref{enest0_linwest} in Proposition~\ref{propLinWest} with 
$\alpha = 1-k p^{(\delta)}$, $\mu = k(p_t^{(\delta)}+p^{(\delta')}_t)$, $\eta=k p^{(\delta')}_{tt}$, and the right-hand side $f=(\delta-\delta')\Delta p_t^{(\delta')}$ directly yields
\begin{equation}
E[p](t)\leq  C(\tau)e^{K(\tau)(R^2+1)T} \, |\delta-\delta'|^2 \, R^2
\end{equation}	
with energy $E[p]$ defined as in \eqref{energy}. The stated rate is obtained by setting $\delta'$ to zero.  
\end{proof}%
\section{The linearized JMGT--Kuznetsov equation}  \label{Sec:Lin_JMGTKuzn}
We continue by investigating a linearization of the JMGT--Kuznetsov equation given by
\begin{subequations} \label{IBVP_LinKuznJMGT}
	\begin{equation}\label{LinKuznJMGT}
\tau \psi_{ttt}+\alpha\psi_{tt} - (\delta+ \tau c^2) \D \psi_t-c^2\D \psi  =\, \sigma\nabla\phi\cdot\nabla\psi_t
	\end{equation}
	\text{with homogeneous Dirichlet boundary conditions and initial conditions}
	\begin{align} \label{IC:LinKuznJMGT}
		(\psi, \psi_t, \psi_{tt})\vert_{t=0}=(\psi_0, \psi_1, \psi_2),
	\end{align}
\end{subequations}
where now the coefficient in front of the second time derivative is given by \eq{\alpha=1-\kappa \phi_t.}\\ 
\indent Since the JMGT--Kuznetsov equation has a quadratic gradient nonlinearity, we will need to obtain uniform bounds for $\nLinfLinf{\nabla \psi}$ and $\nLinfLinf{\nabla \psi_t}$ in the course of the analysis.  Our goal in this section is thus to derive a higher-order energy bound for the linearization \eqref{IBVP_LinKuznJMGT} that is uniform with respect to $\delta$ and will later allow us to derive the corresponding bound for the nonlinear equation. To this end, we strengthen our previous assumptions on the regularity of the coefficients and initial data.
\begin{itemize}
	\item[($\mathcal{A}_2$)] The coefficient $\phi$ is sufficiently smooth so that
	\begin{equation}
	\phi \in L^\infty(0,T; \Honethree) \cap W^{1, \infty}(0,T; \Honetwo),
	\end{equation}
	and uniformly bounded so that \eq{\|\phi\|_{W^{1,\infty}(\Honethree)},\, \|\phi\|_{L^{\infty}(\Honetwo)}\leq R} for some positive constant $R$, independent of $\delta$. This further implies
	\begin{equation}
	\alpha\in W^{1,\infty}(0,T;H^2(\Om))\subseteq\LinfLinf \mbox{ with } \ulal \leq \alpha \leq \olal
	\end{equation}
	for $\ulal=1-|\kappa|\CHtwo R$ and $\olal=1+|\kappa|\CHtwo R$. \vspace*{2mm}
	\item[($\mathcal{A}_3$)] The initial conditions \eqref{IC:LinKuznJMGT} satisfy
	$(\psi_0, \psi_1, \psi_2) \in \spaceK_0=\Honethree\times\Honethree\times\Honetwo.$
\end{itemize}
Note that also here we do not impose a non-degeneracy assumption on $\alpha$. 
\begin{proposition} \label{Prop:LinJMGTKuzn}
	Let $\delta \in [0, \overline{\delta}]$ and $\tau$, $c>0$. Furthermore, let assumptions \textup{$(A_1)$} and \textup{(}$\mathcal{A}_2$\textup{)}--\textup{(}$\mathcal{A}_3$\textup{)} hold. Then problem \eqref{IBVP_LinKuznJMGT} has a unique solution
	\begin{align} \label{regularityKuzn}
		\psi \in \spaceK= W^{3, \infty}(0,T; H_0^1(\Om)) \cap W^{2, \infty}(0,T; \Honetwo) \cap W^{1, \infty}(0,T; \Honethree),
	\end{align}	
which satisfies
	\begin{equation} \label{energy_est_linKuzn}
	\begin{aligned}
	&\nLtwo{\nabla \psi_{ttt}(t)}^2+\nLtwo{\D \psi_{tt}(t)}^2+\nLtwo{\nabla \D \psi_{t}(t)}^2 \\
	\leq&\, C(T, \tau, R) (\nLtwo{\D \psi_2}^2 + \nLtwo{\nabla \D \psi_1}^2  + \nLtwo{\nabla \D \psi_0}^2),
	\end{aligned}
	\end{equation}	
	where the constant $C(T, \tau, R)$ tends to $+ \infty$ as $T\to +\infty$ or $\tau\to0^+$, but is independent of $\delta$.	
\end{proposition}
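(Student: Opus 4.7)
The plan is to follow the Faedo-Galerkin strategy from the proof of Proposition~\ref{propLinWest}, arranging matters so that the semi-discrete iterates are regular enough for $-\Delta$ to be applied without boundary trouble (e.g., by projecting onto the Dirichlet Laplacian eigenbasis, whose elements satisfy $\Delta^k e_n|_{\partial\Omega}=0$ for every $k\in\mathbb{N}$). The key observation is that the target bound \eqref{energy_est_linKuzn} is essentially the weaker-norm estimate \eqref{enest0_linwest} of Proposition~\ref{propLinWest} applied to the auxiliary function $\tilde{\psi}:=-\Delta\psi$, supplemented by a bound on $\nabla\psi_{ttt}$ extracted directly from the PDE.

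Concretely, taking $-\Delta$ of~\eqref{LinKuznJMGT} and using the identity $\Delta(\alpha\psi_{tt})=-\alpha\tilde{\psi}_{tt}+2\nabla\alpha\cdot\nabla\psi_{tt}+\Delta\alpha\,\psi_{tt}$, I would arrive at
\begin{equation*}
\tau\tilde{\psi}_{ttt}+\alpha\tilde{\psi}_{tt}-(\delta+\tau c^2)\Delta\tilde{\psi}_t-c^2\Delta\tilde{\psi}=\tilde f,
\end{equation*}
with source $\tilde f:=-\sigma\Delta(\nabla\phi\cdot\nabla\psi_t)+2\nabla\alpha\cdot\nabla\psi_{tt}+\Delta\alpha\,\psi_{tt}$. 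Since $\psi\in\Honethree$ forces $\Delta\psi|_{\partial\Omega}=0$, the auxiliary $\tilde\psi$ has homogeneous Dirichlet data, and its initial triple $(-\Delta\psi_0,-\Delta\psi_1,-\Delta\psi_2)$ lies in $H_0^1(\Omega)\times H_0^1(\Omega)\times L^2(\Omega)$ by $(\mathcal{A}_3)$. Applying the weaker estimate \eqref{enest0_linwest} to $\tilde\psi$ (with $\mu=\eta=0$) then yields
\begin{equation*}
\nLtwo{\Delta\psi_{tt}(t)}^2+\nLtwo{\nabla\Delta\psi_t(t)}^2+\nLtwo{\nabla\Delta\psi(t)}^2\leq C(\tau)e^{K(\tau)(R^2+1)T}\bigl(\nLtwo{\Delta\psi_2}^2+\nLtwo{\nabla\Delta\psi_1}^2+\nLtwo{\nabla\Delta\psi_0}^2+\nLtwoLtwo{\tilde f}^2\bigr),
\end{equation*}
precisely matching the initial-data side of \eqref{energy_est_linKuzn} up to the $\nLtwoLtwo{\tilde f}^2$ remainder.

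The source $\tilde f$ is controlled termwise via the embeddings $H^2\hookrightarrow L^\infty$ and $H^1\hookrightarrow L^4$: the two lower-order contributions are dominated by $CR\,\nLtwo{\Delta\psi_{tt}}$, while expanding $\Delta(\nabla\phi\cdot\nabla\psi_t)=\nabla\Delta\phi\cdot\nabla\psi_t+2\nabla^2\phi:\nabla^2\psi_t+\nabla\phi\cdot\nabla\Delta\psi_t$ places each summand in $L^2$ under $(\mathcal{A}_2)$. The last summand couples $\tilde f$ back to the quantity $\nabla\Delta\psi_t$ being bounded, but comes with a uniformly bounded prefactor $\|\nabla\phi\|_{L^\infty}\lesssim R$, so after time integration it is absorbed via Gronwall's inequality. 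The missing bound on $\nLtwo{\nabla\psi_{ttt}}$ is then read off the PDE by solving for $\tau\psi_{ttt}$ and taking the gradient,
\begin{equation*}
\tau\nabla\psi_{ttt}=-\nabla(\alpha\psi_{tt})+(\delta+\tau c^2)\nabla\Delta\psi_t+c^2\nabla\Delta\psi+\sigma\nabla(\nabla\phi\cdot\nabla\psi_t),
\end{equation*}
where each right-hand side term has already been estimated and, crucially, $\delta+\tau c^2\in[\tau c^2,\overline{\delta}+\tau c^2]$ is uniformly bounded, so the resulting bound does not deteriorate as $\delta\to 0^+$. Standard weak-$\star$ compactness arguments as in~\cite{KaltenbacherNikolic,kaltenbacher2020parabolic} then transfer the semi-discrete bounds to $\psi\in\spaceK$ with the initial data attained in the weak sense, and uniqueness follows from linearity by applying the same estimate to the difference of two solutions.

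The main obstacle is the Gronwall closure for the $\tilde f$ estimate: because $\tilde f$ genuinely contains $\nabla\Delta\psi_t$, one has to carefully split its $L^2$-norm into the portion admitting a time-uniformly bounded prefactor (absorbable by Gronwall) and the pieces whose coefficients $\nabla\alpha$, $\Delta\alpha$, or $\nabla\Delta\phi$ must be controlled purely via the $R$-bounds from $(\mathcal{A}_2)$. A secondary technicality is that applying $-\Delta$ to the PDE is rigorous only at the semi-discrete level; the eigenfunction basis is chosen precisely so that each Galerkin iterate sits in $\Honethree$, which both legitimizes the $-\Delta$ manipulation and guarantees that the integration by parts underlying~\eqref{enest0_linwest} produces no boundary contributions.
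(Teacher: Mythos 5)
Your overall strategy is the same as the paper's: Faedo--Galerkin in the Dirichlet--Laplacian eigenbasis (so that $\Delta\psi^n$ vanishes on $\partial\Omega$ and working at the level of $-\Delta\psi$ is legitimate), a second-order-in-space energy estimate closed by Gronwall, a bound on $\nabla\psi_{ttt}$ read off from the PDE, and weak-$\star$ compactness. The only structural difference is that you phrase the main estimate as an application of the lower-order bound \eqref{enest0_linwest} to $\tilde\psi=-\Delta\psi$, whereas the paper tests \eqref{LinKuznJMGT} directly with $\Delta^2\psi_{tt}$; these amount to the same multipliers up to the $\tau\tilde\psi_{tt}+\tilde\psi_t$ versus $\tilde\psi_{tt}$ combination.

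There is, however, a genuine gap in invoking \eqref{enest0_linwest} as a black box: that estimate is proved under assumption $(A_2)$, which requires $1-\alpha\in W^{1,\infty}(0,T;\Honetwo)$, because its underlying energy identity integrates $(\alpha\,\tilde\psi_{tt},\tilde\psi_t)_{L^2}$ by parts in time and produces the term $-\tfrac12(\alpha_t\tilde\psi_t,\tilde\psi_t)_{L^2}$. In the present setting $\alpha=1-\kappa\phi_t$, so this needs $\phi_{tt}\in L^\infty(0,T;\Honetwo)$, which assumption $(\mathcal{A}_2)$ of Proposition~\ref{Prop:LinJMGTKuzn} does not provide (it only controls $\phi$ and $\phi_t$). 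The paper's choice of multiplier $\Delta^2\psi_{tt}$ is made precisely so that no $\alpha_t$ term ever appears. A second, smaller issue you already half-acknowledge: since $\tilde f$ contains $\nabla\phi\cdot\nabla\Delta\psi_t$, $\nabla\Delta\phi\cdot\nabla\psi_t$ and the $\Delta\psi_{tt}$-type lower-order terms, it is not an admissible datum in the sense of $\widetilde{(A_4)}$, so you cannot cite \eqref{enest0_linwest} verbatim but must re-open its derivation to perform the Gronwall absorption at intermediate times $t$. Once you are re-deriving the estimate anyway, the clean fix for both problems is to avoid the time integration by parts of the $\alpha$-term and estimate $(\alpha\,\tilde\psi_{tt},\tilde\psi_t)_{L^2}$ directly using $|\alpha|\le\olal$ (controlling $\|\tilde\psi_t(t)\|_{L^2}$ by its initial value plus $\int_0^t\|\tilde\psi_{tt}\|_{L^2}\ds$), or simply test with $\tilde\psi_{tt}$ alone as the paper does; with that modification your argument goes through under the stated hypotheses.
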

\begin{proof}
	The proof can be carried out as before by employing smooth Faedo--Galerkin approximations in space, where we project the problem onto the span $V_n$ of the first $n$ eigenfunctions of the Dirichlet Laplacian pointwise in time. We will again focus our attention on deriving the crucial energy bound for the Galerkin approximations $\psi^n$. For ease of notation, we drop the superscript $n$ below. \\
	\indent We test \eqref{LinKuznJMGT} with $\D^2 \psi_{tt}$ and integrate in space. Since $\psi=\Delta \psi=0$ on $\partial \Om$ for our Galerkin approximations, the following identities hold:
	\begin{equation}
	\begin{aligned}
	\prodLtwo{\alpha \psi_{tt}}{\D^2 \psi_{tt}} =&\, 	\prodLtwo{\D [\alpha \psi_{tt}]}{\D \psi_{tt}}\\
	=&\,\prodLtwo{\alpha \D \psi_{tt}+\psi_{tt} \D \alpha +2\nabla\alpha\cdot\nabla \psi_{tt}}{\D \psi_{tt}}, \\
	-c^2\prodLtwo{\D \psi}{\D^2 \psi_{tt}} =&\, c^2\ddt \prodLtwo{\nabla \D \psi}{\nabla \D \psi_t}-c^2\nLtwo{\nabla \D \psi_t}^2.
	\end{aligned}
\end{equation}
	We thus arrive at the energy identity:
		\begin{equation} \label{LinKuzn_id}
	\begin{aligned}
	&\begin{multlined}[t]\frac12 \tau\ddt \nLtwo{\D \psi_{tt}}^2 +\frac12 (\delta+\tau c^2)\ddt \nLtwo{\nabla \D \psi_t}^2
 \end{multlined}\\
	=&\,\begin{multlined}[t] -\prodLtwo{\alpha \D \psi_{tt}+ \psi_{tt} \D \alpha +2\nabla\alpha\cdot\nabla \psi_{tt}}{\D \psi_{tt}}-c^2\ddt \prodLtwo{\nabla \D \psi}{\nabla \D \psi_t}\\+c^2\nLtwo{\nabla \D \psi_t}^2 
	+\sigma \prodLtwo{\nabla \phi \cdot \nabla \psi_t}{\Delta^2 \psi_{tt}}.\end{multlined}
	\end{aligned}
	\end{equation}
	We next integrate in time and estimate the terms arising on the right-hand side. The $\alpha$ terms can be estimated in the usual manner by utilizing H\"older's inequality and the uniform boundedness of $\alpha$ on account of assumption $(\mathcal{A}_2)$. We further have
	\[
	\begin{aligned}
	c^2\prodLtwo{\nabla \D \psi(t)}{\nabla \D \psi_t(t)} 
	\leq&\, \frac{1}{2 \varepsilon} T c^4\nLtwoLtwo{\nabla \D \psi_{t}}^2+\frac{\varepsilon}{2} \nLtwo{\nabla \D \psi_{t}(t)}^2
	\end{aligned}
	\]
where $\varepsilon \in (0, \tau c^2/2)$. Since $\nabla \phi \cdot \nabla \psi_t=0$ on $\partial \Om$ by the semi-discrete PDE, we have
\[
\begin{aligned}
\prodLtwo{\nabla \phi \cdot \nabla \psi_t}{\Delta^2 \psi_{tt}}
=&\,  \prodLtwo{\nabla \D \phi\cdot\nabla \psi_t+ 2 D^2 \phi:D^2 \psi_t+ \nabla \phi\cdot\nabla \D \psi_t}{\Delta \psi_{tt}}
\end{aligned}
\]	
where $D^2 v=(\partial_{x_i} \partial_{x_j} v)_{i,j}$ denotes the Hessian. By elliptic regularity, the Hessian satisfies \eq{\nLtwo{D^2 v}\leq C_{\textup{H}} \nLtwo{\Delta v}} 
and so we can rely on the following bound:
\[
\nLfour{D^2 v}\leq \CHone (\nLtwo{D^3 v}+\nLtwo{D^2 v})\leq \CHone C_{\textup{H}}(\nLtwo{\nabla\D v}+\nLtwo{\D v}).
\]
Thus, together with assumption $(\mathcal{A}_2)$ on the uniform boundedness of $\phi$, we have 
\[
\begin{aligned}
&\left |\int_0^t \prodLtwo{\nabla \phi \cdot \nabla \psi_t}{\Delta^2 \psi_{tt}}\ds \right|
\leq\, \begin{multlined}[t]  \nLtwoLtwo{\D \psi_{tt}} R \left\{\vphantom{\CHone^2}\nLtwoLinf{\nabla \psi_{t}}+\nLtwoLtwo{\nabla \D \psi_t}\right.\\ \left. +4 \CHone^2 C^2_{\textup{H}}(\nLtwoLtwo{\nabla \D \psi_{t}}+\nLtwoLtwo{\D \psi_{t}})\right\}.\end{multlined}
\end{aligned}
\]
Employing the derived bounds within \eqref{LinKuzn_id} after integration in time yields
\begin{equation} 
\begin{aligned}
&\begin{multlined}[t]\frac12 \tau \nLtwo{\D \psi_{tt}(t)}^2 +\frac12 (\tau c^2-\varepsilon) \nLtwo{\nabla \D \psi_t(t)}^2
\end{multlined}\\
\lesssim&\,\begin{multlined}[t]\nLtwo{\D \psi_2}^2 + \nLtwo{\nabla \D \psi_1}^2 +\nLtwo{\nabla \D \psi_0}^2 
+ R^2(\|\psi_{tt}\|_{L^2(H^2)}^2+\nLtwoLfour{\nabla \psi_{tt}}^2)\\ 
\hspace*{-0.5cm}+\nLtwoLtwo{\D \psi_{tt}}^2
+\left. R^2\nLtwoLinf{\nabla \psi_{t}}^2+(R^2+T)\nLtwoLtwo{\nabla \D \psi_t}^2\right. +R^2\nLtwoLtwo{\D \psi_{t}}^2. \end{multlined}
\end{aligned}
\end{equation}
Note that by elliptic regularity $\nLtwo{\nabla \psi_{tt}(t)} \leq \nHtwo{\psi_{tt}(t)} \leq C \nLtwo{\D \psi_{tt}(t)}.$
An application of Gronwall's inequality yields 
\begin{equation} \label{interim_bound_linKuzn}
\begin{aligned}
\nLtwo{\D \psi_{tt}(t)}^2+\nLtwo{\nabla \D \psi_{t}(t)}^2 
\leq\, C(T, \tau, R) (\nLtwo{\D \psi_2}^2 + \nLtwo{\nabla \D \psi_1}^2  + \nLtwo{\nabla \D \psi_0}^2).
\end{aligned}
\end{equation}
We can obtain an additional uniform bound on $\nLinfLtwo{\nabla \psi_{ttt}}$ by testing with $-\D \psi_{ttt}$, and relying on the assumptions on $\phi$. Standard compactness arguments allow us to carry over the result to the solution of~\eqref{IBVP_LinKuznJMGT}. Note that from $\psi \in \spaceK$, it follows that
\begin{equation}
\begin{aligned}
\psi \in\, C([0,T]; \Honethree),
\ \psi_t \in\, C_{w}([0,T]; \Honethree),\
\psi_{tt} \in\, C_{w}([0,T]; \Honetwo);
\end{aligned}
\end{equation}
cf.~\cite[Lemma 3.3]{temam2012infinite}. Thus $\psi_1$ and $\psi_2$ are attained weakly.
\end{proof}
For $f \in \LtwoLtwo$, we also briefly consider the linearization with a source term
\begin{equation}\label{LinKuznJMGT_source}
\tau \psi_{ttt}+\alpha\psi_{tt} - (\delta+ \tau c^2) \D \psi_t-c^2\D \psi  =\, \sigma\nabla\phi\cdot\nabla\psi_t+f,
\end{equation}
under the same assumptions on $\phi$. Formally testing with $\psi_{tt}$ and integrating over space and time, and employing standard computations leads to 
\begin{equation}  \label{LinKuznJMGT_source_linwest}
\begin{aligned}
&\nLtwo{\psi_{tt}(t)}^2+ \nLtwo{\nabla \psi_t(t)}^2 \\
\leq&\, C(T, \tau, R) (\nLtwo{\psi_2}^2+ \nLtwo{\nabla \psi_1}^2+\nLtwo{\nabla \psi_0}^2+ \nLtwoLtwo{f}^2),
\end{aligned}
\end{equation}
which we will rely on in the upcoming proof.

\section{Uniform bounds for the JMGT--Kuznetsov equation and the inviscid limit} \label{Sec:FixedPoint_KuznWest_Limit}
The goal of this section is to investigate the behavior of solutions to equation \eqref{JMGT_Kuzn} as $\delta \rightarrow 0^+$. As before, our work plan is to derive uniform bounds for a linearization and then relate them to the nonlinear model via a fixed-point argument. We thus introduce the mapping
\eq{\TK:\phi \mapsto \psi,} where we take $\phi$ from a suitably chosen ball in the space $\spaceK$ and $\psi$ as the solution of the linear equation \eqref{LinKuznJMGT} with initial data 
\[
(\psi(0), \psi_t(0), \psi_{tt})=(\phi(0), \phi_t(0), \phi_{tt}(0))=(\psi_0, \psi_1, \psi_2)
\] 
and $\alpha=1-\kappa \phi_t$. We next prove a small-data well-posedness result for \eqref{JMGT_Kuzn}.
\begin{theorem} \label{Thm:FixedPoint_JMGTKuzn}
Let assumption $(A_1)$ hold and let $\tau$, $c>0$ and $k \in \R$. Furthermore, let $T>0$ be given. Then there exists $r>0$, such that for any initial data $(\psi_0, \psi_1, \psi_2)\in \spaceK_0$ satisfying
\begin{equation}
\|\psi_2\|_{H^2}^2 + \|\psi_1\|_{H^3}^2  + \|\psi_0\|_{H^3}^2 \leq r^2,
\end{equation}
and any $\delta\in[0,\bar{\delta}]$, there exists a unique solution $\psi\in\spaceK$ of the corresponding initial boundary-value problem for the JMGT--Kuznetsov equation \eqref{JMGT_Kuzn}. Furthermore, the solution fulfills the estimate 
	\begin{equation}
\begin{aligned}
&\nLtwo{\nabla \psi_{ttt}(t)}^2+\nLtwo{\D \psi_{tt}(t)}^2+\nLtwo{\nabla \D \psi_{t}(t)}^2 \\
\leq&\, C(T,\tau) (\nLtwo{\D \psi_2}^2 + \nLtwo{\nabla \D \psi_1}^2  + \nLtwo{\nabla \D \psi_0}^2),
\end{aligned}
\end{equation}		
where the constant $C(T,\tau)$ tends to $+ \infty$ as $T\to\infty$ or $\tau\to0^+$, but is independent of $\delta$.	
\end{theorem}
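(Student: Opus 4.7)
The plan is to mirror the fixed-point approach of Theorem~\ref{Thm:West_Wellposedness} in the Kuznetsov setting, with $\TK$ in place of $\TW$, invoking the higher-order linear bound of Proposition~\ref{Prop:LinJMGTKuzn} for the self-mapping property and the lower-order bound \eqref{LinKuznJMGT_source_linwest} for contractivity in the weaker $\textup{E}$-norm. To this end, I would fix parameters $R,r>0$ (to be chosen below) and introduce the closed ball
\begin{equation*}
B_R^{\textup{K}} = \left\{\phi \in \spaceK : \|\phi\|_{\spaceK} \leq R,\ \phi(0)=\psi_0,\ \phi_t(0)=\psi_1,\ \phi_{tt}(0)=\psi_2\right\}.
\end{equation*}
For any $\phi \in B_R^{\textup{K}}$, the embedding $\spaceK \subseteq W^{1,\infty}(0,T;\Honethree)$ ensures that $\phi$ satisfies assumption $(\mathcal{A}_2)$ with constant proportional to $R$, so Proposition~\ref{Prop:LinJMGTKuzn} produces a unique $\psi=\TK\phi\in\spaceK$ obeying \eqref{energy_est_linKuzn}. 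Combining this estimate with the uniform $\nLinfLtwo{\nabla \psi_{ttt}}$ bound (obtained by testing the linear PDE with $-\Delta \psi_{ttt}$ exactly as in the proof of Proposition~\ref{Prop:LinJMGTKuzn}) yields
\begin{equation*}
\|\TK \phi\|_{\spaceK}^2 \leq C(T, \tau, R) \bigl(\|\psi_2\|_{H^2}^2 + \|\psi_1\|_{H^3}^2 + \|\psi_0\|_{H^3}^2\bigr) \leq C(T, \tau, R)\, r^2.
\end{equation*}
For given $T$, fixing $R$ (say $R=1$) and then choosing $r$ small enough that $C(T, \tau, R)\, r^2 \leq R^2$ makes $\TK$ a self-map on $B_R^{\textup{K}}$, in the spirit of Remark~\ref{rem:smallness}.

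To obtain strict contractivity in the norm of $\textup{E}$, I would pick $\phi^{(1)}, \phi^{(2)} \in B_R^{\textup{K}}$, set $\psi^{(j)} = \TK \phi^{(j)}$, and form the differences $\overline{\psi} = \psi^{(1)} - \psi^{(2)}$ and $\overline{\phi} = \phi^{(1)} - \phi^{(2)}$. A direct subtraction shows that $\overline{\psi}$ satisfies the linear problem
\begin{equation*}
\tau \overline{\psi}_{ttt} + (1 - \kappa \phi^{(1)}_t)\overline{\psi}_{tt} - (\delta + \tau c^2)\Delta \overline{\psi}_t - c^2 \Delta \overline{\psi} - \sigma \nabla \phi^{(1)} \cdot \nabla \overline{\psi}_t = \kappa \overline{\phi}_t\, \psi^{(2)}_{tt} + \sigma \nabla \overline{\phi} \cdot \nabla \psi^{(2)}_t
\end{equation*}
with zero initial data. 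Applying \eqref{LinKuznJMGT_source_linwest} with $\phi = \phi^{(1)}$ and the right-hand side above as source, and then estimating its $L^2(L^2)$ norm via the embeddings $H^2 \hookrightarrow L^\infty$ and $H^1 \hookrightarrow L^4$ together with the self-mapping bound $\|\psi^{(2)}\|_{\spaceK} \leq C(T,\tau,R)^{1/2}\, r$, I would arrive at
\begin{equation*}
\|\overline{\psi}\|_{\textup{E}} \leq C(T, \tau, R)\, r\, \sqrt{T}\, \|\overline{\phi}\|_{\textup{E}},
\end{equation*}
with the $\sqrt{T}$ factor originating from $\overline{\phi}(0) = \overline{\phi}_t(0) = 0$. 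Shrinking $r$ further (with $T$ and $R$ already fixed) then produces a contraction constant $\theta<1$.

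The remaining steps proceed in complete analogy with Theorem~\ref{Thm:West_Wellposedness}: the ball $B_R^{\textup{K}}$ equipped with the $\textup{E}$-metric is a closed subset of the complete space $\textup{E}$, since $B_R^{\textup{K}}$ is a ball and hence weakly-$\star$ closed in $\spaceK$; Banach's fixed-point theorem then produces the unique fixed point $\psi=\TK\psi\in B_R^{\textup{K}}$, which is the sought solution of \eqref{JMGT_Kuzn}; uniqueness in the full space $\spaceK$ is promoted from $B_R^{\textup{K}}$ by a standard stability-type argument on the equation for the difference of two solutions. The stated uniform estimate is just Proposition~\ref{Prop:LinJMGTKuzn} applied at the fixed point, whereupon $R$ depends only on $T$ and $\tau$ and the constant collapses to $C(T,\tau)$. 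The main technical obstacle I anticipate is the control of $\nabla \overline{\phi}\cdot\nabla\psi^{(2)}_t$ in the contractivity step: only the weaker $\textup{E}$-norm of $\overline{\phi}$ is available while a spatial derivative of $\overline{\phi}$ appears, so a uniform $L^\infty(L^\infty)$ bound on $\nabla \psi^{(2)}_t$ is required. This is exactly what the strengthened regularity of $\spaceK$ secures through $\nabla \psi^{(2)}_t \in L^\infty(0,T;H^2)\hookrightarrow L^\infty(0,T;L^\infty)$, and it is the reason Proposition~\ref{Prop:LinJMGTKuzn} controls an $H^3$-type norm of $\psi_t$ rather than merely the $H^2$-type norm employed in the Westervelt setting.
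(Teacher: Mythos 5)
Your proposal is correct and follows essentially the same route as the paper: self-mapping of $\TK$ on $B_R^{\textup{K}}$ via Proposition~\ref{Prop:LinJMGTKuzn} with $r\leq R/\sqrt{C(T,\tau,R)}$, contractivity in the weaker $\textup{E}$-norm via \eqref{LinKuznJMGT_source_linwest} applied to the difference equation (your splitting of the gradient nonlinearity, placing $\nabla\phi^{(1)}\cdot\nabla\overline{\psi}_t$ on the left and $\nabla\overline{\phi}\cdot\nabla\psi^{(2)}_t$ in the source, is an inessential variant of the paper's symmetric choice), and closedness of the ball plus Banach's fixed-point theorem exactly as in Theorem~\ref{Thm:West_Wellposedness}. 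Your closing observation about needing $\nabla\psi_t\in L^\infty(0,T;L^\infty(\Omega))$ is precisely the point the paper exploits through the $\spaceK$ regularity.
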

\begin{proof}
Let $R>0$. Take $\phi \in B_R^\textup{K}$, where
\begin{equation}
\begin{aligned}
B_R^\textup{K} =\{\ \phi \in \spaceK: \ \|\phi\|_{\spaceK}\leq R,\ (\phi(0), \phi_t(0), \phi_{tt}(0))=(\psi_0, \psi_1, \psi_2)\, \}.
\end{aligned}
\end{equation}
Then \eq{\|\phi\|_{W^{1,\infty}(\Honethree)},\, \|\phi\|_{L^{\infty}(\Honetwo)}\leq R.}
Thus on account of Proposition~\ref{Prop:LinJMGTKuzn}, the mapping $\TK$ is well-defined. Furthermore, by \eqref{energy_est_linKuzn}, it is a self-mapping provided $r$ is chosen so that \eq{0<r\leq R/\sqrt{C(T, \tau, R)}.} \\
\indent To show strict contractivity in the energy norm, we take  $\phi^{(1)}, \phi^{(2)} \in B_R^\textup{K}$ and set  $\psi^{(1)}=\TK \phi^{(1)}$ and $\psi^{(2)}=\TK \phi^{(2)}$. Then $\opsi=\psi^{(1)}-\psi^{(2)}$ solves
\begin{equation} \label{Kuzn_contractivity_eq}
\begin{aligned}
&\tau \opsi_{ttt}+(1-\kappa \phi_t^{(1)} )\opsi_{tt}-(\delta+\tau c^2) \Delta \opsi_t-c^2 \Delta \opsi-\sigma \nabla \phi^{(2)} \cdot \nabla \opsi_t \\
=&\, \kappa \overline{\phi}_t \psi^{(2)}_{tt}+\sigma \nabla \overline{\phi} \cdot \nabla \psi^{(1)}_t 
\end{aligned}
\end{equation}
and satisfies zero initial conditions.  This equation corresponds to \eqref{LinKuznJMGT_source} if we set 
\[
f= \kappa \overline{\phi}_t \psi^{(2)}_{tt}+\sigma \nabla \overline{\phi} \cdot \nabla \psi^{(1)}_t 
\]
together with $\alpha=1-\kappa \phi_t^{(1)}$. Using estimate \eqref{LinKuznJMGT_source_linwest} thus yields
\begin{equation}  \label{Contractivity_identity}
\begin{aligned}
\nLtwo{\opsi_{tt}(t)}^2+ \nLtwo{\nabla \opsi_t(t)}^2+\nLtwo{\nabla \opsi(t)}^2 
\leq C(T, \tau, R) \nLtwoLtwo{f}^2
\end{aligned}
\end{equation}
and then it remains to estimate the $f$ term. By H\"older's inequality, we have
\begin{equation}
\begin{aligned}
\nLtwoLtwo{f}
\leq&\,\begin{multlined}[t] |\kappa| \nLinfLfour{\psi_{tt}^{(2)}}\nLtwoLfour{\overline{\phi}_t}  +|\sigma| \nLinfLinf{\nabla \psi_t^{(1)}}\nLtwoLtwo{\nabla \overline{\phi} }.
\end{multlined}
\end{aligned}
\end{equation}
Furthermore,
$ \nLinfLfour{\psi_{tt}^{(2)}}\nLtwoLfour{\overline{\phi}_t}
\leq\,  \CHone^2 \nLinfLtwo{\nabla \psi_{tt}^{(2)}}T\nLinfLtwo{\nabla \overline{\phi}_t}.$
By additionally noting that $\nLtwoLtwo{\nabla \overline{\phi} } \leq T \nLtwoLtwo{\nabla \overline{\phi}_t}$, it further follows that
\begin{equation}
\begin{aligned}
\nLtwoLtwo{f}^2
 \lesssim&\,\begin{multlined}[t] 2\kappa^2\CHone^4 \nLinfLtwo{\nabla \psi_{tt}^{(2)}}^2T^2\nLinfLtwo{\nabla \overline{\phi}_t}^2\\ \hspace*{2cm}
+2\sigma^2 \nLinfLinf{\nabla \psi_t^{(1)}}^2T^2\nLtwoLtwo{\nabla \overline{\phi_t} }^2. 
\end{multlined}
\end{aligned}
\end{equation}
Employing this bound in \eqref{Contractivity_identity} and relying on Gronwall's inequality leads to
\begin{equation} 
\begin{aligned}
&\sup_{t \in (0,T)}\nLtwo{\opsi_{tt}(t)}+\sup_{t \in (0,T)}\nLtwo{\nabla \opsi_t(t)} \\
\leq&\,\begin{multlined}[t] C(T, R)(\nLinfLtwo{\nabla \psi_{tt}^{(2)}}+\nLinfLinf{\nabla \psi_t^{(1)}})T\sup_{t \in (0,T)}\nLtwo{\nabla \overline{\phi}_t(t)}. \end{multlined}
\end{aligned}
\end{equation}
Note that
$\displaystyle \sup_{t \in (0,T)} \nLtwo{\nabla \overline{\phi}_t(t)} \leq \|\overline{\phi}\|_{\textup{E}}$. By \eqref{energy_est_linKuzn}, we know that
\[
\nLinfLtwo{\nabla \psi_{tt}^{(2)}}+\nLinfLinf{\nabla \psi_t^{(1)}} \leq \sqrt{\tilde{C}(T, \tau, R)}\, r
\]
for some $\tilde{C}(T, \tau, R)>0$, independent of $\delta$. Thus we can achieve strict contractivity of $\TK$ in the energy norm by reducing $r$. We can reason as in the proof of Theorem~\ref{Thm:West_Wellposedness} concerning $B_R^\textup{K}$ being closed with respect to the metric induced by $\|\cdot\|_{\textup{E}}$ and establish our claim by employing the Banach Fixed-point theorem.
\end{proof}
\begin{remark}[Small data or short time for the JMGT--Kuznetsov equation]
Like in the JMGT--Westervelt case (cf. Remark~\ref{rem:smallness}), instead of choosing the maximal magnitude of the initial data $r$ small enough for fixed final time $T$, we could have also achieved the self-mapping and contractivity properties needed for proving Theorem~\ref{Thm:FixedPoint_JMGTKuzn} by choosing $T$ small enough, given initial data that is smooth but of arbitrary size. 
\end{remark}
We conclude our theoretical investigations by proving a convergence result for the JMGT--Kuznetsov equation \eqref{JMGT_Kuzn}. Note that as a by-product, we also obtain a convergence result for the JMGT--Westervelt equation in potential form~\eqref{JMGT_West_potential} by setting $\kappa=\frac{1}{c^2}\left(1+\frac{B}{2A}\right) $ and $\sigma=0$ in ~\eqref{JMGT_Kuzn} .
\begin{theorem} \label{Thm:Kuzn_Limit}
Let the assumptions of Theorem~\ref{Thm:FixedPoint_JMGTKuzn} hold. Then the family of solutions $\{\psi^{(\delta)}\}_{\delta>0}$ to the JMGT--Kuznetsov equation \eqref{JMGT_Kuzn} converges  in the topology induced by the energy norm for the wave equation at a linear rate to the solution $\psi$ of the inviscid JMGT--Kuznetsov equation as $\delta\to0^+$. 
\end{theorem}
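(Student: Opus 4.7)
The plan is to mimic the Cauchy-sequence strategy of Theorem~\ref{Thm:West_WeakLimit}, now relying on the lower-order source-term estimate \eqref{LinKuznJMGT_source_linwest} from Section~\ref{Sec:Lin_JMGTKuzn} as the linearized tool. Fix $\delta, \delta' \in [0, \overline{\delta}]$, let $\psi^{(\delta)}$ and $\psi^{(\delta')}$ be the solutions supplied by Theorem~\ref{Thm:FixedPoint_JMGTKuzn}, and set $\opsi = \psi^{(\delta)}-\psi^{(\delta')}$. Subtracting the two JMGT--Kuznetsov equations and splitting the nonlinear differences via
\begin{equation*}
\kappa\psi_t^{(\delta)}\psi_{tt}^{(\delta)}-\kappa\psi_t^{(\delta')}\psi_{tt}^{(\delta')} = \kappa\psi_t^{(\delta)}\opsi_{tt}+\kappa\opsi_t\psi_{tt}^{(\delta')},
\end{equation*}
and analogously for the gradient nonlinearity, moves the $\opsi_{tt}$-- and $\nabla\opsi_t$--carrying pieces to the left and yields
\begin{equation*}
\tau\opsi_{ttt}+(1-\kappa\psi_t^{(\delta)})\opsi_{tt}-(\delta+\tau c^2)\D\opsi_t-c^2\D\opsi-\sigma\nabla\psi^{(\delta)}\cdot\nabla\opsi_t = g,
\end{equation*}
with $g=(\delta-\delta')\D\psi_t^{(\delta')}+\kappa\opsi_t\psi_{tt}^{(\delta')}+\sigma\nabla\opsi\cdot\nabla\psi_t^{(\delta')}$ and zero initial data.

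The next step is to invoke estimate \eqref{LinKuznJMGT_source_linwest} with $\phi=\psi^{(\delta)}$, $\alpha=1-\kappa\psi_t^{(\delta)}$, and source $f=g$. Theorem~\ref{Thm:FixedPoint_JMGTKuzn} provides $\|\psi^{(\delta)}\|_{\spaceK}\leq R$ uniformly in $\delta$, so assumptions $(\mathcal{A}_2)$--$(\mathcal{A}_3)$ are met with a $\delta$-independent radius. The resulting inequality
\begin{equation*}
\nLtwo{\opsi_{tt}(t)}^2+\nLtwo{\nabla\opsi_t(t)}^2 \leq C(T,\tau,R)\,\nLtwoLtwo{g}^2,
\end{equation*}
combined with the triangle inequality for $g$ and the embeddings $H^2\hookrightarrow L^\infty$ and $H^3\hookrightarrow W^{1,\infty}$ (valid for $d\leq 3$), gives
\begin{equation*}
\nLtwoLtwo{g} \lesssim |\delta-\delta'|\,\nLtwoLtwo{\D\psi_t^{(\delta')}}+\nLtwoLtwo{\opsi_t}+\nLtwoLtwo{\nabla\opsi},
\end{equation*}
where the implicit constants depend on $R$, $T$, $|\kappa|$, $|\sigma|$ and the embedding constants. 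The higher-order uniform bound \eqref{energy_est_linKuzn}, applied to $\psi^{(\delta')}$, keeps $\nLtwoLtwo{\D\psi_t^{(\delta')}}$ bounded independently of $\delta'$. A Gronwall absorption---using $\nabla\opsi(t)=\int_0^t\nabla\opsi_t\ds$ owing to the vanishing initial data---then produces $\|\opsi\|_{\textup{E}}\lesssim |\delta-\delta'|$, and the claimed linear convergence rate follows upon setting $\delta'=0$.

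The main obstacle I anticipate is bookkeeping: one must verify that the constant $C(T,\tau,R)$ in \eqref{LinKuznJMGT_source_linwest} does not degenerate as $\delta\to0^+$ (in particular, the integration by parts sketched at the end of Section~\ref{Sec:Lin_JMGTKuzn} should not introduce any hidden $\delta^{-1}$ factor), and that the nonlinear differences produce only terms controllable by the $\textup{E}$-norm of $\opsi$. The latter is the reason for splitting the quadratic products so that the $\psi^{(\delta)}$-factor (rather than the $\psi^{(\delta')}$-factor) multiplies the highest-order derivative of $\opsi$, ensuring compatibility with the structure required by \eqref{LinKuznJMGT_source_linwest}. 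As announced right before the statement of Theorem~\ref{Thm:Kuzn_Limit}, specializing $\sigma=0$ and $\kappa=(1+B/(2A))/c^2$ in the same argument simultaneously delivers the linear-rate inviscid limit for the JMGT--Westervelt equation in potential form \eqref{JMGT_West_potential}.
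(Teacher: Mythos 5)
Your proposal is correct and follows essentially the same route as the paper: form the difference equation for $\opsi=\psi^{(\delta)}-\psi^{(\delta')}$ with zero initial data, control it via the lower-order energy estimate obtained by testing with $\opsi_{tt}$ (i.e. the argument behind \eqref{LinKuznJMGT_source_linwest}, as in the contractivity step of Theorem~\ref{Thm:FixedPoint_JMGTKuzn}), use the $\delta$-independent $\spaceK$-bounds to handle $\nLtwoLtwo{\D\psi_t^{(\delta')}}$ and the nonlinear remainder terms, absorb the $\opsi$-dependent pieces by Gronwall, and set $\delta'=0$. The only difference — putting $\sigma\nabla\psi^{(\delta)}\cdot\nabla\opsi_t$ on the left rather than keeping $\sigma\nabla\psi^{(\delta')}\cdot\nabla\opsi_t$ on the right as the paper does — is immaterial, since both splittings are estimated identically.
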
	
\begin{proof}
Let $\delta$, $\delta' \in [0, \overline{\delta}]$. The difference equation for $\opsi=\psi^{(\delta)}-\psi^{(\delta')}$ is given by
\begin{equation} \label{KuznJMGT_Cauchy_eq}
\begin{aligned}
&\tau \opsi_{ttt}+(1-\kappa \psi_t^{(\delta)} )\opsi_{tt}-(\delta+\tau c^2) \Delta \opsi_t-c^2 \Delta \opsi \\
=& \, \kappa \overline{\psi}_t\psi^{(\delta')} _{tt}+(\delta-\delta')\Delta \psi_t^{(\delta')}+\sigma \nabla \opsi \cdot \nabla \psi_t^{(\delta)}+\sigma \nabla \psi^{(\delta')} \cdot \nabla \opsi_t
\end{aligned}
\end{equation}
with $(\opsi, \opsi_t, \opsi_{tt})\vert_{t=0}=(0, 0, 0)$. Testing this equation with $\opsi_{tt}$ and integrating over space and time leads to
\begin{equation} \label{Limit_identity}
\begin{aligned}
&\frac{\tau}{2}\nLtwo{\opsi_{tt}(t)}^2+\nLtwoLtwo{\sqrt{\alpha}\,\opsi_{tt}}^2+\frac{\delta+\tau c^2}{2}\nLtwo{\nabla \opsi_t(t)}^2 \\
=&\,\begin{multlined}[t] \kappa \intTO  \psi_{tt}^{(\delta')} \opsi_t \opsi_{tt} \dxs+\sigma \intTO ( \nabla \opsi \cdot \nabla \psi_t^{(\delta)}+ \nabla \psi^{(\delta')} \cdot \nabla \opsi_t) \opsi_{tt}\dxs\\
+ (\delta-\delta')\intTO  \D \psi_t^{(\delta')} \opsi_{tt} \dxs-c^2 \intTO  \nabla \opsi \cdot \nabla \opsi_{tt} \dxs, \end{multlined}
\end{aligned}
\end{equation}
where now $\alpha=1-\kappa \psi_t^{(\delta)}$. We can proceed similarly to the proof of contractivity in Theorem~\ref{Thm:FixedPoint_JMGTKuzn} to arrive at the estimate
\begin{equation}
\sup_{t \in (0,T)} E[\opsi](t) \lesssim (\delta-\delta')^2\nLtwoLtwo{\D \psi_t^{(\delta')}}^2.
\end{equation}
Setting $\delta'$ to zero yields the claimed convergence rate.
\end{proof}
\begin{remark}[Different boundary conditions] It should be said that, to model ultrasonic propagation in realistic settings, acoustic equations are in practice often considered with Neumann excitation and possibly absorbing boundary conditions to avoid non-physical reflections; see, e.g.,~\cite[\S 5]{kaltenbacher2007numerical}.  We expect our analysis to carry over in a relatively straightforward manner to problems with Neumann conditions, under suitable assumptions on the data. The case of having both Neumann and absorbing conditions on disjoint parts of the boundary is more delicate due to the fact that our energy analysis would imply only $H^{3/2+s}$ regularity in space of the solution for  $s \in [0, 1/2)$. We refer the reader to~\cite[\S 6]{kaltenbacher2019vanishing}, where the analysis of the JMGT--Westervelt equation is performed with Neumann and the lowest-order Engquist--Majda boundary conditions for $\delta>0$ fixed and vanishing thermal relaxation time. We would thus expect that the arguments of~\cite[\S 6]{kaltenbacher2019vanishing} can be adapted to the present setting of vanishing sound diffusivity for the generalized MGT and JMGT--Westervelt equations, whereas it does not seem immediately feasible to extend these considerations to the JMGT--Kuznetsov model.
\end{remark}	
\vspace*{-2mm}

\section{Numerical results} \label{Sec:NumResults}
In this section, we illustrate some of our previous theoretical results numerically by employing a Matlab implementation. For discretization in the spatial variable, we use continuous piecewise linear finite elements on a uniform discretization of the computational domain with mesh size $h$. In time, we rely on a Newmark discretization for third-order in time equations  realized as a predictor-corrector scheme; we refer to~\cite[\S 8]{KaltenbacherNikolic} for details. The three parameters within the scheme are chosen as $(1/12,1/4,1/2)$. Having in mind our discussion  in Remark~\ref{Remark:PerturbedSpeed} regarding the perturbation of the speed of sound, we heuristically choose the time step $\Delta t$ so that
\begin{align} \label{CFL_condition}
\left(c+\sqrt{\overline{\delta}/\tau}\right)\Delta t \leq \textup{CFL}\cdot h
\end{align}
with $\textup{CFL}=0.1$. The nonlinearity is resolved via a fixed-point iteration, where we treat the whole nonlinear term as the previous iterate and set the tolerance to $\textup{TOL}=10^{-8}$. \\
\indent We consider sound propagation through water, where the speed of sound is taken to be $c=\SI{1500}{m/s}$, density $\varrho=\SI{1000}{kg/m^3}$, and the parameter of nonlinearity is set to $B/A=5$; cf.~\cite[\S 5]{kaltenbacher2007numerical} and~\cite{beyer1998parameter}. For sea water, at least two  molecular relaxation processes are known to be pronounced, with molecular relaxation times $\tau_1=\SI{1e-3}{s}$ and $\tau_2=\SI{1.5e-5}{s}$; see~\cite[\S 1]{naugolnykh2000nonlinear}. We choose to adopt the same values for the thermal relaxation time in our numerical experiments and additionally test with $\tau=\tau_0=\SI{1.5e-7}{s}$ to observe the effects of the dissipation and nonlinearity when $\tau$ is relatively small.
\subsection{Nonlinear propagation in a channel}
We first consider nonlinear propagation in a narrow channel, as modeled by the JMGT--Westervelt equation \eqref{JMGT_West} with $p=p(x,t)$ and $x \in \Omega=(0, 0.4)$. We set initial data to
\[
(p, p_t, p_{tt})\vert_{t=0}=\left(\mathcal{A}\exp\left(-\frac{(x-0.2)^2}{2\sigma^2}\right), 0, 0 \right),
\]
with $\mathcal{A}=\SI{100}{MPa}$ and $\sigma=0.01$, which corresponds to a Gaussian-like initial pressure distribution centered around $x=0.2$. The sound diffusivity $\delta$ is expected to be relatively small in water with values in the interval $[10^{-9}, 10^{-4}] \, \textup{m}^2/\textup{s}$; cf.~\cite[\S 5]{kaltenbacher2007numerical} and~\cite[\S 5]{walsh2007finite}. To demonstrate the influence of this parameter on the solutions of the equation, we test the problem in an exaggerated setting by choosing $\delta \in \{0, 1\}\, \textup{m}^2/\textup{s}$. To resolve the nonlinear behavior, we employ $600$ elements in space and choose the time step according to \eqref{CFL_condition}.  \\
\indent Figures~\ref{Fig:Sensitivity1}--\ref{Fig:Sensitivity3} depict the acoustic pressure at final time $T=\SI{70}{\mu s}$ for two different values of $\delta$. The difference between the linear and nonlinear pressure distribution is also displayed in thermally relaxing, inviscid media (where $\delta=0$) on the right. In Figure~\ref{Fig:Sensitivity1}, the thermal relaxation parameter is taken to be relatively small, $\tau=\SI{1.5e-7}{s}$. Thus, we expect the behavior as observed in the corresponding second-order model. Indeed, we see that increasing $\delta$ leads to the damping of the amplitude and subduing the nonlinear steepening of the wavefront. For a rigorous study into the behavior of third-order acoustic models as $\tau \rightarrow 0^+$ with $\delta>0$ fixed, we refer to, for example,~\cite{KaltenbacherNikolic, bongarti2020vanishing}. 
\begin{figure}[h]
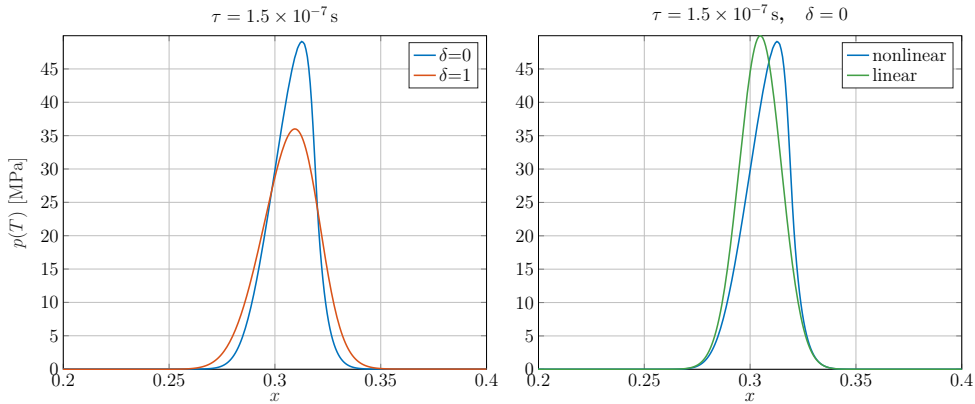

	\hspace*{-0.3cm}	
	\scalebox{0.49}{\input{plot_u1}}\hspace*{0.0cm}
	\scalebox{0.49}{\input{lin_vs_nonlin1}}
	
	\caption{\textbf{(left)} Pressure at final time for different values of $\delta$ and fixed, small $\tau$ \textbf{(right)} Linear and nonlinear pressure distribution at final time with $\delta=0$}
	\label{Fig:Sensitivity1}
	
\end{figure}

\indent In Figure~\ref{Fig:Sensitivity2}, the thermal relaxation time is set to $\SI{1.5e-5}{s}$, which appears to be in an intermediate range in terms of the displayed effects. Here increasing $\delta$ influences the amplitude of the wave and its propagation speed. We also note that in this parameter regime, the nonlinear effects are subdued compared to the case of having a shorter relaxation time. \\  
\indent Figure~\ref{Fig:Sensitivity3} displays the pressure distribution when the thermal relaxation time is relatively large, $\tau=\SI{1e-3}{s}$. Here the effects of thermal relaxation appear to overtake both the effects of dissipation and nonlinearity. In fact, here it might be more sensible to observe $z=\tau u_t+u$, which we also plot at final time in Figure~\ref{Fig:Sensitivity4}. We see that the effects of increasing $\delta$ are practically negligible. Our parameter study in Figures~\ref{Fig:Sensitivity1}--\ref{Fig:Sensitivity4} suggests that a deeper theoretical investigation into the interplay among $\delta$, $\tau$, and the nonlinear parameters is of interest.
\begin{figure}[h]
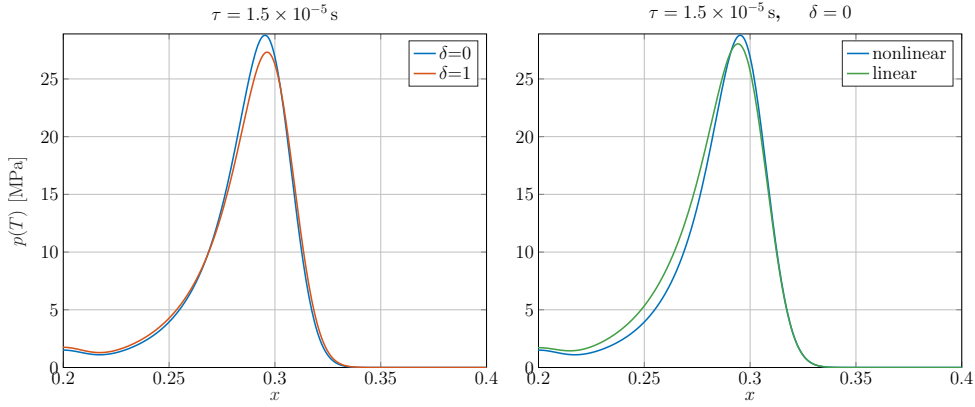

	\hspace*{-0.3cm} \scalebox{0.49}{\input{plot_u2}}\hspace*{0.0cm}
	\scalebox{0.49}{\input{lin_vs_nonlin2}}
	
	\caption{\textbf{(left)} Pressure at final time for different values of $\delta$ and medium $\tau$ \textbf{(right)}
		Linear and nonlinear pressure distribution at final time with $\delta=0$}
	\label{Fig:Sensitivity2}	
\end{figure}%
\begin{figure}[h]
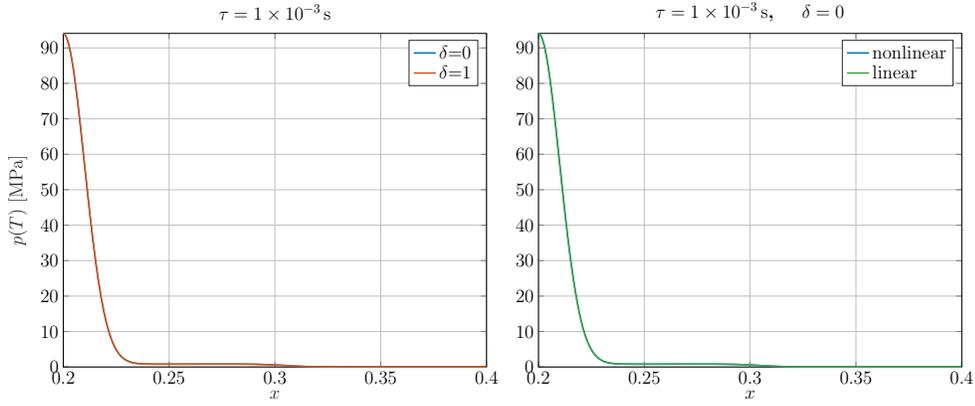

	\hspace*{-0.3cm}	
	\scalebox{0.49}{\input{plot_u3}}\hspace*{0.0cm}
	\scalebox{0.49}{\input{lin_vs_nonlin3}}
	\caption{\textbf{(left)} Pressure at final time for different values of $\delta$ and large
		$\tau$ \textbf{(right)} Linear and nonlinear pressure distribution at final time with $\delta=0$}
	\label{Fig:Sensitivity3}
	
\end{figure}

\indent For the convergence study, we take $\delta \in [0, 10^{-2}]\, \textup{m}^2/\textup{s}$. In the experiments we conducted the same rate of convergence was obtained for the three values of the thermal relaxation parameter considered before; we thus present here only the case $\tau=\SI{1.5e-5}{s}$. The plot of the relative error in the energy norm 
\begin{align} \label{approx_error}
\textup{err}=\frac{\|p_h^{(\delta)}-p_h\|_{\textup{E}}}{\|p_h\|_{\textup{E}}}
\end{align}
is given in Figure~\ref{Fig:Convergence1D2D} on the left, where
\[
\|p_h\|_{\textup{E}}=\sup_{t \in (0,T)} \nLtwo{p_{h, tt}(t)}+\sup_{t \in (0,T)}\nLtwo{\nabla p_{h,t}(t)}.
\]
We observe a linear rate of convergence, as expected on account of Theorem~\ref{Thm:West_WeakLimit}.
 \begin{figure}[h]
	\centering
	\scalebox{0.49}{\input{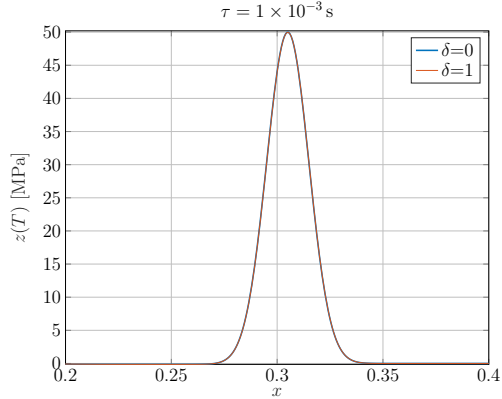}}
	\caption{Distribution of $z=\tau p_t+p$ at final time for two different values of $\delta$ and a relatively large thermal relaxation time $\tau$ }
	\label{Fig:Sensitivity4}
\end{figure}
\begin{figure}[ht]
	\hspace*{-0.3cm}
	\scalebox{0.45}{
%
\definecolor{mycolor1}{rgb}{0.00000,0.44700,0.74100}%
\begin{tikzpicture}[font=\Large]

\begin{axis}[%
width=4.521in,
height=3.566in,
at={(0.758in,0.481in)},
scale only axis,
xmin=0,
xmax=0.01,
xlabel style={font=\color{white!15!black}},
xlabel={\Large $\delta$},
ymin=0,
ymax=0.00190908136705375,
ylabel style={font=\color{white!15!black}},
ylabel={\Large Relative error},
axis background/.style={fill=white},
title style={font=\bfseries},
xmajorgrids,
ymajorgrids,
legend style={legend cell align=left, align=left, draw=white!15!black}
]
\addplot [color=mycolor1, line width=1.2pt]
table[row sep=crcr]{%
0	0\\
1e-10	1.91072979490226e-11\\
1e-09	1.90995227506312e-10\\
1e-08	1.90989922417742e-09\\
1e-07	1.90988938287255e-08\\
1e-06	1.9098878698146e-07\\
1e-05	1.90988704083639e-06\\
0.0001	1.9098797837469e-05\\
0.001	0.000190980721004107\\
0.01	0.00190908136705375\\
};

\end{axis}
\end{tikzpicture}
	\scalebox{0.45}{
%
\definecolor{mycolor1}{rgb}{0.00000,0.44700,0.74100}%
\begin{tikzpicture}[font=\Large]

\begin{axis}[%
width=4.521in,
height=3.566in,
at={(0.758in,0.481in)},
scale only axis,
xmin=0,
xmax=0.01,
xlabel style={font=\color{white!15!black}},
xlabel={\Large $\delta$},
ymin=0,
ymax=0.001324967537783,
ylabel style={font=\color{white!15!black}},
ylabel={\Large Relative error},
xmajorgrids,
ymajorgrids,
axis background/.style={fill=white},
legend style={legend cell align=left, align=left, draw=white!15!black}
]
\addplot [color=mycolor1, line width=1.2pt]
  table[row sep=crcr]{%
0	0\\
1e-10	1.32560447369846e-11\\
1e-09	1.32555158979924e-10\\
1e-08	1.32555755890381e-09\\
1e-07	1.32555759377174e-08\\
1e-06	1.32555750399122e-07\\
1e-05	1.32555697863721e-06\\
0.0001	1.32555166724341e-05\\
0.001	0.000132549855745382\\
0.01	0.001324967537783\\
};

\end{axis}
\end{tikzpicture}
	\caption{ Relative error in the energy norm with respect to $\delta$ ({\bf left}) Nonlinear propagation in a channel ({\bf right}) Two-dimensional linear propagation with an external source of sound}
	\label{Fig:Convergence1D2D}	
\end{figure}
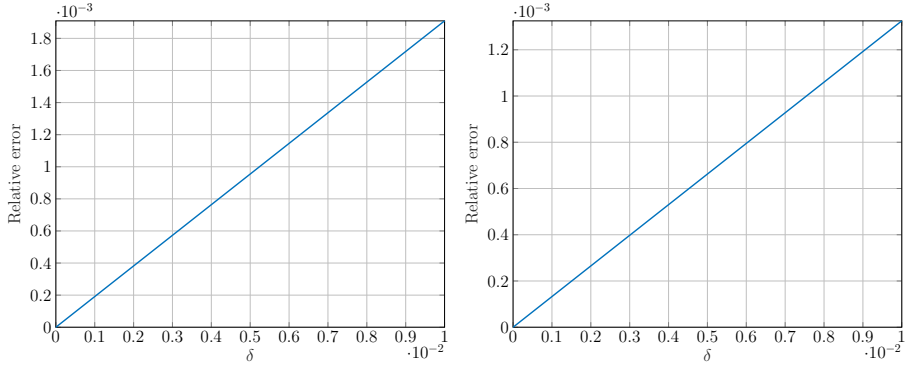
\subsection{Linear propagation with an external source} We also illustrate our convergence results for the linear MGT equation \eqref{MGT} in a two-dimensional setting with $\alpha=1$ and a source term. For $\Omega=(0, 0.5) \times (0, 0.5)$, we take the source term to be
\begin{equation}
\begin{aligned}
f(x,y, t)= \mathcal{A}\exp\left(-\frac{(x-x_0)^2}{2\sigma_x^2}-\frac{(y-y_0)^2}{2\sigma_y^2}\right)\sin(w t),
\end{aligned}
\end{equation}
where $\mathcal{A}=10^{10}$, $x_0=y_0=0.25$, $\sigma_x=0.02$, and $\sigma_y=0.01$. Furthermore, the frequency is set to $w=2\pi f$ with $f=2 \cdot 10^4$. The initial conditions $(p_0, p_1, p_2)$ are assumed to be zero. We employ a uniform triangular mesh with mesh size $h=0.01$ to discretize the computational domain. The time step is then chosen according to \eqref{CFL_condition}. We take again the medium parameters of water and set the thermal relaxation time to $\tau=\SI{1.5e-5}{s}$. Figure~\ref{Fig:2DSnapshots} provides snapshots of the approximate acoustic pressure in inviscid media, where $\delta=0$, until the final time $T=\SI{1.5e-4}{s}$. 
\begin{figure}[h]
	\subcaptionbox{$t=\SI{3.8e-5}{s}$}{\includegraphics[scale=0.46, trim={0 0 0.2cm 0}]{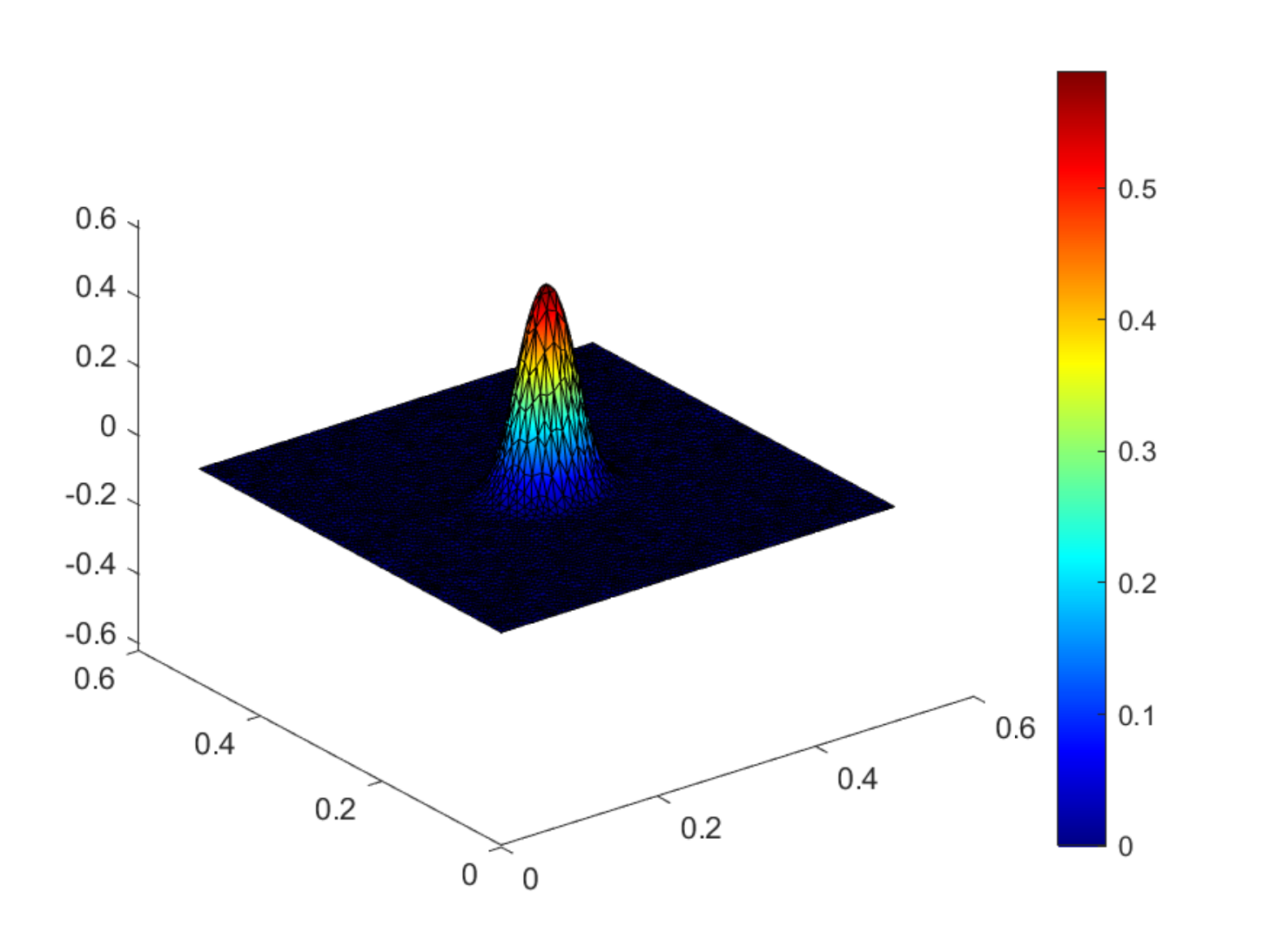}}\hspace*{-0.3cm}
	\subcaptionbox{$t=\SI{4.8e-5}{s}$}{\includegraphics[scale=0.46, trim={0 0 0.7cm 0}]{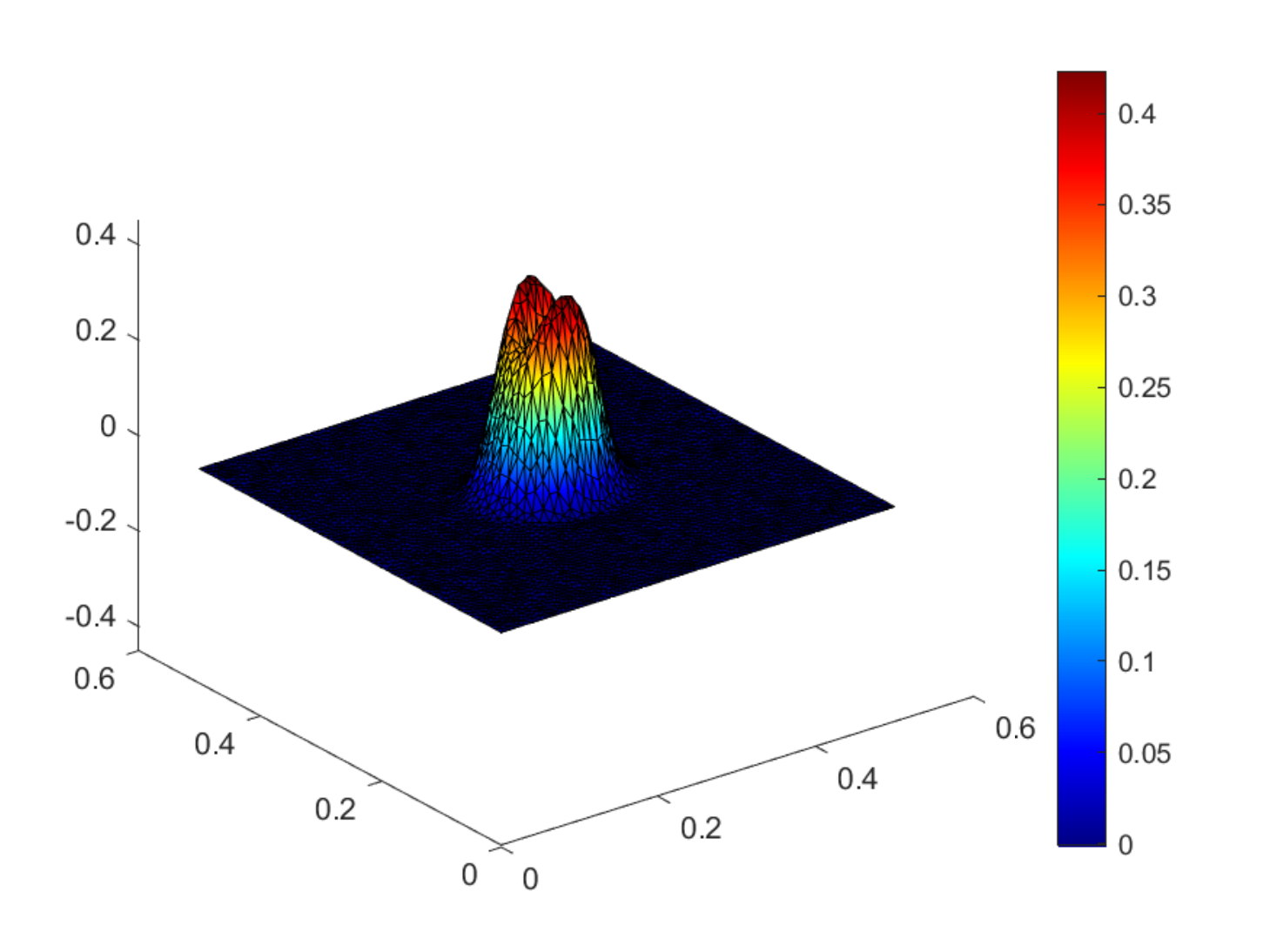}}\\
	\subcaptionbox{$t=\SI{8.7e-5}{s}$}{\includegraphics[scale=0.46, trim={0 0 0.2cm 0}]{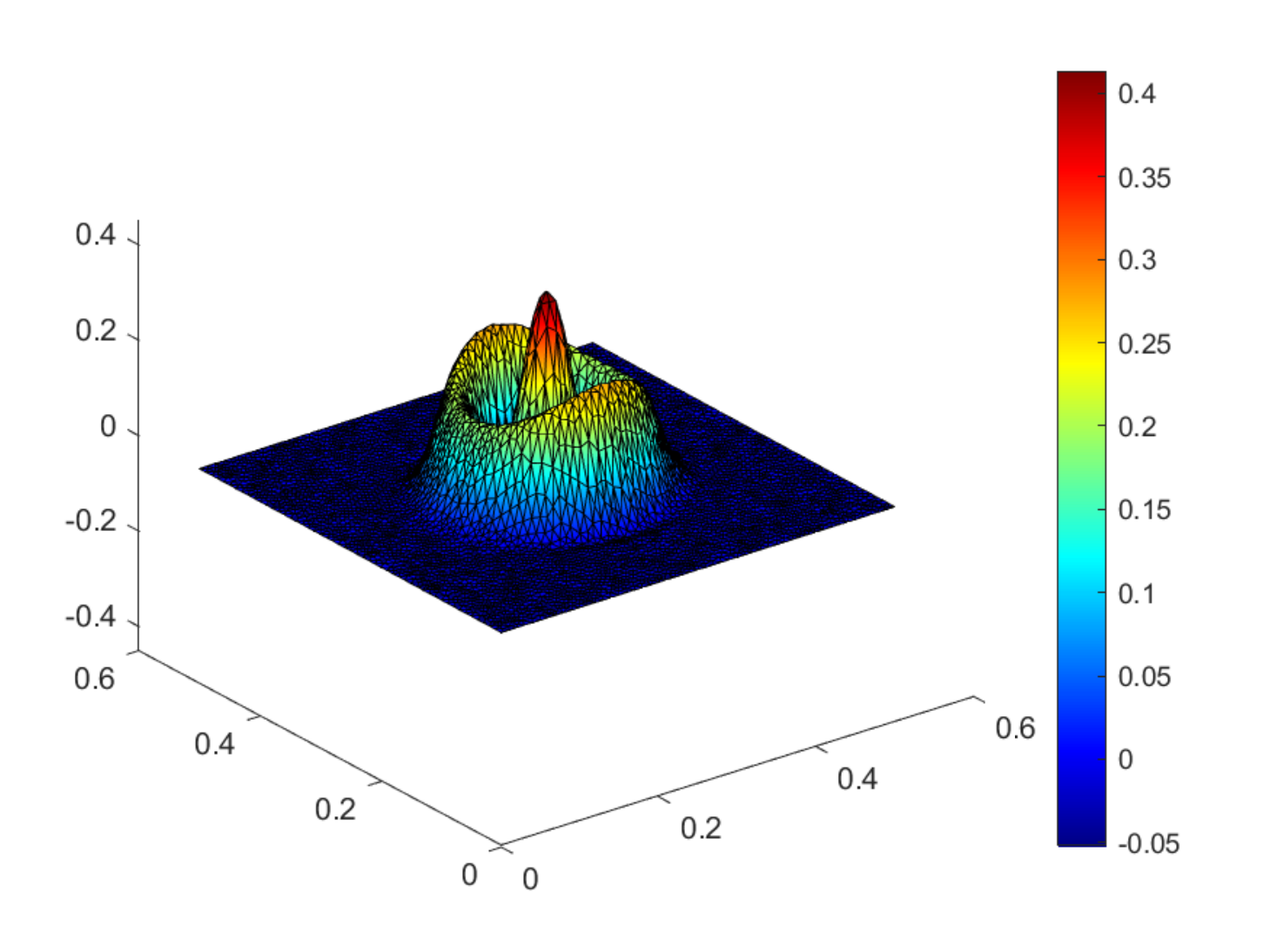}}\hspace*{-0.3cm}
	\subcaptionbox{$t=T$}{\includegraphics[scale=0.46, trim={0 0 0.7cm 0}]{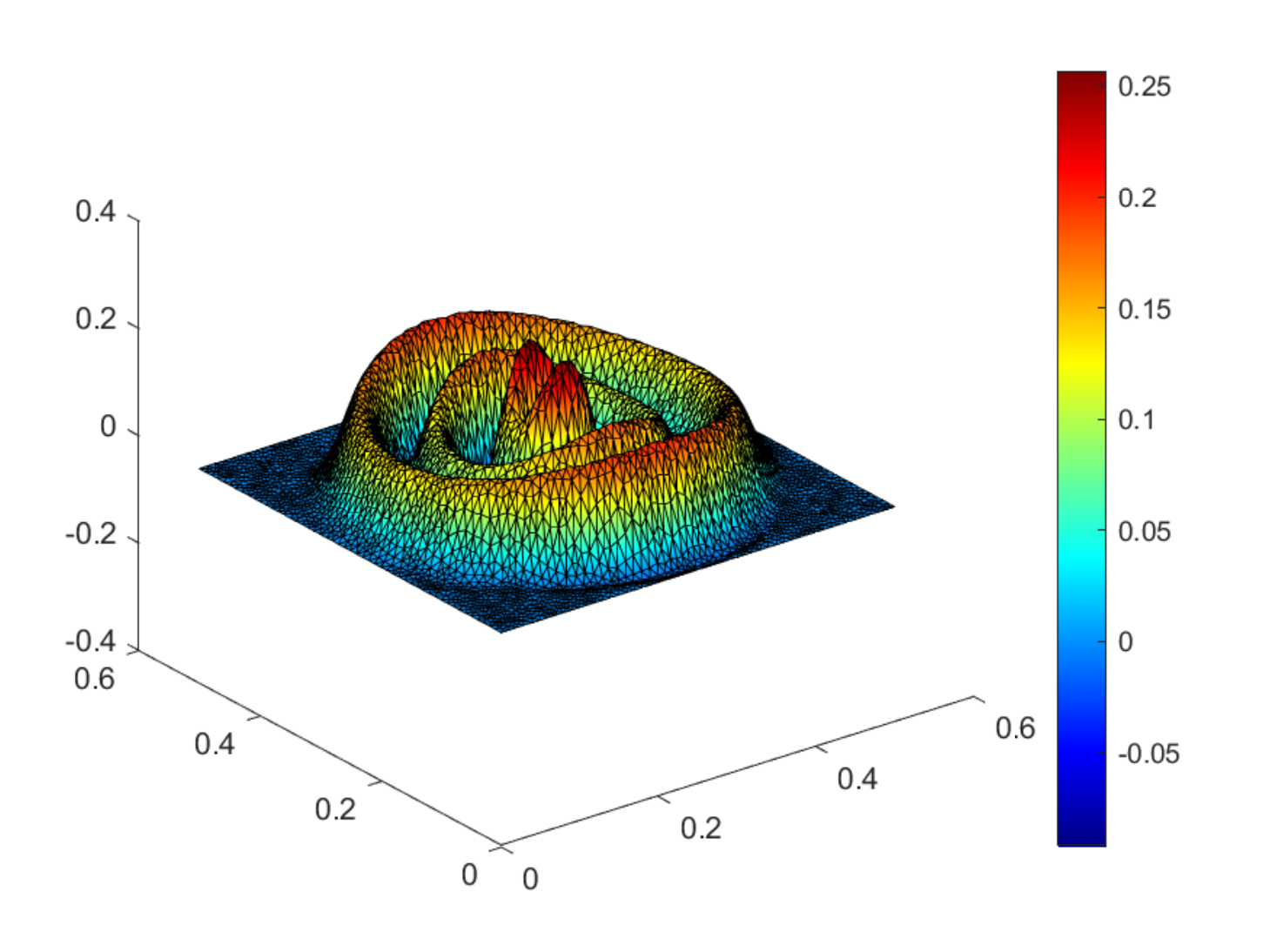}}
	\caption{Linear evolution of the acoustic pressure over time in thermally relaxing, inviscid media in the presence of an external source of sound}  	\label{Fig:2DSnapshots} 	
\end{figure}

\indent To perform the convergence study, we take $\delta \in [0, 10^{-2}]\, \textup{m}^2/\textup{s}$ and compute the relative error in the energy norm according to \eqref{approx_error}. The plot is given in Figure~\ref{Fig:Convergence1D2D} on the right. We observe again a linear convergence rate with respect to $\delta$, as we expected based on the result of Theorem~\ref{Thm:West_WeakLimit_lin}.%

\begin{remark}[Propagation through tissue-like media] 
In different biomedical applications, including lithotripsy, ultrasonic waves propagate through both fluidic and tissue-like (heterogeneous) media. It is well-known that in tissues   , the attenuation of the wave obeys a power law with respect to the frequency; we refer to the book~\cite{holm2019waves} for a detailed insight into these phenomena. These effects are typically modeled by space- or time-fractional wave equations; see, for example,~\cite[\S 7, Eq. (7.8)]{holm2019waves}. It is thus of clear interest to involve fractional propagation in the future analytical and numerical investigations. 
\end{remark}	
\section*{Discussion and outlook}
In this paper, we have investigated the limiting behavior of the third-order JMGT equations and their linearizations, as the sound diffusivity vanishes. The analysis also included well-posedness of the respective limiting equations as well as uniform $\delta$-independent energy estimates. From our estimates and even more clearly from our numerical experiments, it is apparent that there is a rather involved interplay among the diffusivity parameter $\delta$, the relaxation time $\tau$, and the nonlinearity (determined by the parameters $\kappa$ and $\sigma$). Analytic and further numerical studies of this interaction will, therefore, be the subject of further research. 
\end{document}